\newcommand \nc{\newcommand}
\newtheorem{theorem}{Theorem}[section]
\newtheorem{lemma}[theorem]{Lemma}
\newtheorem{proposition}[theorem]{Proposition}
\newtheorem{corollary}[theorem]{Corollary}
\newtheorem{definition}[theorem]{Definition}
\newtheorem{remark}[theorem]{Remark}
\newtheorem*{assw}{Assumption (ICKHw)}
\newtheorem*{assr}{Assumption (RICKH)}
\newtheorem*{assrw}{Assumption (RICKHw)}
\nc{\ba}{\begin{array}}\nc{\ea}{\end{array}}
\nc{\be}{\begin{eqnarray}}\nc{\ee}{\end{eqnarray}}
\nc{\beq}{\begin{equation}}\nc{\eeq}{\end{equation}}
\nc{\bex}{\begin{eqnarray*}}\nc{\eex}{\end{eqnarray*}}
\nc{\btm}{\begin{theorem}} \nc{\etm}{\end{theorem}}
\nc{\blm}{\begin{lemma}} \nc{\elm}{\end{lemma}}
\nc{\R}{\mathbb{R}} \nc{\va}{\varepsilon} \nc{\ls}{\limits}
\begin{document}

\begin{frontmatter}



\title{Inviscid limit of the inhomogeneous incompressible Navier-Stokes equations under the weak Kolmogorov hypothesis
 in $\mathbb{R}^3$}


\author[W]{Dixi Wang}
\address[W]{Department of Mathematics, University of Florida, Gainesville, FL 32611, United States of America}
\ead[W]{dixiwang@ufl.edu}

\author[Y]{Cheng Yu}
\address[Y]{Department of Mathematics, University of Florida, Gainesville, FL 32611, United States of America}
\ead[Y]{chengyu@ufl.edu}

\author[Z]{Xinhua Zhao\corref{cor1}}
\cortext[cor1]{Corresponding author}
\address[Z]{School of Mathematics, South China University of Technology, Guangzhou 510641, China}
\ead[Z]{xinhuazhao211@163.com}

\begin{abstract} 
In this paper,
we consider the inviscid limit of inhomogeneous incompressible Navier-Stokes equations  under the weak Kolmogorov hypothesis in $\R^3$. In particular, we first deduce the Kolmogorov-type hypothesis in $\mathbb{R}^3$, which yields the uniform bounds of $\alpha^{th}$-order fractional derivatives of $\sqrt{\rho^\mu}{\bf u}^\mu $ in $L^2_x$ for some $\alpha>0$, independent of the viscosity. The uniform bounds can provide  strong convergence of $\sqrt{\rho^{\mu}}\bf u^{\mu}$ in $L^2$ space. This  shows that  the inviscid limit  is a weak solution to the corresponding Euler equations.
\end{abstract}

\vspace{4mm}

\begin{keyword}
Inviscid limit, Kolmogorov hypothesis, inhomogeneous Navier-Stokes equations, Euler equations.	


\MSC 76D05, 35Q31, 35D30.
 \end{keyword}

\end{frontmatter}


\tableofcontents

\vspace{4mm}
\section {Introduction}
\setcounter{equation}{0}
The main purpose of this paper is to study the vanishing viscosity limit of inhomogeneous Navier-Stokes equations in $\mathbb{R}^3$ under the well known hypothesis by Kolmogorov \cite
{Kolmogorov2, Kolmogorov1}. In particular, a weaker version Assumption (KHw) of Assumption (KH) which was derived in \cite{Chen-Glimm2012, Chen-Glimm2019} in a periodic domain. This  can provide  the convergence of weak solutions of the Navier-Stokes equations  through a subsequence to a solution of the Euler equations. 
We are particularly interested in extending these results to the inhomogeneous fluids in the whole space, because of  the special structure of inhomogeneous equations and different formulation of  Kolmogorov hypothesis in $\mathbb{R}^3 $.

We are particularly interested in the following Navier-Stokes equations for the inhomogeneous fluids:
\begin{align}\label{maineq}
\left\{
\begin{array}{l}
 \rho_{t} + \mathrm{div}(\rho {\bf u}) = 0,\\
(\rho {\bf u})_t + \mathrm{div}(\rho {\bf u}\otimes {\bf u}) + \nabla{p} - \mu\Delta{\bf u}= \rho \mathbf{f} ,\\
\mathrm{div}~{\bf u}=0,
\end{array}
\right.
\end{align}
with initial condition

\begin{equation}\label{I.C.}
(\rho,\rho {\bf u})|_{t=0}=( \rho_0,{\bf m}_0 )({\bf x}),~~~ \bf x \in \mathbb{R}^3,
\end{equation}
and boundary condition
\begin{equation}\label{B.C.}
{\bf u} \rightarrow {\bf u}_\infty,~~~as~|{\bf x}| \rightarrow +\infty,~~for~all~t \geq 0,
\end{equation}
where $ {\bf u}_\infty $ is fixed vector in $ \mathbb{R}^3. $ 
Here
 $\rho, {\bf u}, P$ stand for the density, velocity and pressure of the fluid respectively, $\mu>0$ is the viscosity coefficient,
and ${\bf
f}={\bf f}({\bf x}, t)=(f_1, f_2, f_3)({\bf x},t)$ denotes a given external force.

 There are many literatures studying the existence and uniqueness of solutions to inhomogeneous incompressible Navier-Stokes equations, see Refs. \cite{AKM1990,AVK1974,Simon1990} and the references therein. Lady\v{z}enskaja and Solonnikov \cite{Ladysolo1975} first addressed the question of unique solvability of \eqref{maineq}. The global existence of weak solutions to \eqref{maineq} with the large initial data was first proved by Lions \cite{Lions1}. For clarity of presentation, we assume that  ${\bf u}_\infty=0$ in this paper. More precisely, for any $ T > 0$ and any $ {\bf f} \in L^2(0,T;L^2(\mathbb{R}^3))$, and the initial data satisfies the following conditions:
\begin{align}\label{ics}
  & \rho_0 \geq0~~a.e.~in~\mathbb{R}^3,~~\rho_0 \in L^\infty(\mathbb{R}^3), \nonumber \\
  &{\bf m}_0 \in L^2(\mathbb{R}^3),~~{\bf m}_0=0~~a.e.~on~\left\{\rho_0=0 \right\},
  ~~\frac{{\bf m}_0^2}{\rho_0} \in L^1(\mathbb{R}^3),
  \\
  &
  (1/{\sqrt{\rho_0}}) 1_{(\rho_0 < \delta_0)} \in L^2(\mathbb{R}^3)~~for~some~positive~ constant~\delta_0.
  \end{align}
If the initial data verifies the above conditions,
Lions \cite{Lions1} proved the global existence of weak solutions in the following sense:
\begin{definition}[\cite{Lions1}]\label{defi-1}
 ~For any $  T > 0, $ $ (\rho^\mu,{\bf u}^\mu)(t,x) $ is a weak solution on $ [0,T] $
 of \eqref{maineq}-\eqref{B.C.} if
 \begin{itemize}
\item
$ \sqrt{\rho^\mu}{\bf u}^\mu \in L^\infty(0,T;L^2(\mathbb{R}^3)),~
  \nabla {\bf u}^\mu \in L^2((0,T)\times\mathbb{R}^3),~
    \rho^\mu \in C([0,T];L^p(\mathbb{R}^3))~~for~1 \leq p< \infty, $

\item
 For any $ \boldsymbol{\phi} \in C_0^\infty(\mathbb{R}_+ \times \mathbb{R}^3;\mathbb{R}^3) $ such that $ \mathrm{div}~\boldsymbol{\phi}=0,$
 \begin{align}\label{momentum for NS}
 & \int _0^T\int _{\mathbb{R}^3} \left(\rho^{\mu}{\bf u}^{\mu} \cdot\boldsymbol{\phi}_t+(\rho^{\mu} {\bf u}^{\mu} \otimes {\bf u}^{\mu}): \nabla\boldsymbol{\phi}+\rho^{\mu} \mathbf{f} \cdot\boldsymbol{\phi}\right)\, \mathrm{d}{\bf x}\, \mathrm{d}t
  +\int_{\mathbb{R}^3}{\bf m}_0({\bf x})\cdot\boldsymbol{\phi}(0,{\bf x})\,\mathrm{d}{\bf x}
\nonumber \\
 =&\int _0^T\int _{\mathbb{R}^3} \mu\nabla {\bf u}^\mu\cdot\nabla \boldsymbol{\phi}\, \mathrm{d}{\bf x}\, \mathrm{d}t.
\end{align}
\item For any $ \phi \in C_0^\infty(\mathbb{R}_+ \times \mathbb{R}^3;\mathbb{R})$ 
\begin{equation}
  \int _0^T\int _{\mathbb{R}^3} (\rho^{\mu} \phi_t+{\rho^{\mu}} {\bf u}^{\mu} \cdot \nabla \phi)\, \mathrm{d}{\bf x}\, \mathrm{d}t+ \int_{\mathbb{R}^3} \rho^{\mu}_0 \phi(0,{\bf x})\, \mathrm{d}{\bf x}=0.
\end{equation}
\item The energy inequality holds for any $ t \in [0,T] $
 \begin{equation}\label{energy}
    \int _{\mathbb{R}^3} \frac{\rho^\mu |{\bf u^\mu}|^2}{2}\, \mathrm{d}{\bf x}
    +\int _0^t\int _{\mathbb{R}^3} \mu|\nabla {\bf u^\mu}|^2\, \mathrm{d}{\bf x}\, \mathrm{d} s \leq \int _{\mathbb{R}^3} \frac{|{\bf m}_0|^2}{2 \rho_0}\, \mathrm{d}{\bf x} +\int _0^t\int _{\mathbb{R}^3} \rho^\mu{\bf u^\mu} \cdot {\bf f}\, \mathrm{d}{\bf x}\, \mathrm{d} s.
  \end{equation}
 \end{itemize}
\end{definition}
\noindent However the uniqueness of the weak solutions in three-dimensional space is still an interesting open question, which is linked to the global regularity.

The inviscid limit for Navier-Stokes equations has also been extensively studied (see Refs.\cite{Bernicot2016,Chemin1996,Msmoudi2007} for instance). 
The goal of this paper is to show that the weak solutions of \eqref{maineq}-\eqref{B.C.} converge to a weak solution of the following Euler equations, if they are under the  Kolmogorov hypothesis:
\begin{align}\label{Eulereq}
\left\{
\begin{array}{l}
 \rho_{t} + \mathrm{div}(\rho {\bf u}) = 0,\\
(\rho {\bf u})_t + \mathrm{div}(\rho {\bf u}\otimes {\bf u}) + \nabla{P} =\rho \mathbf{f} ,\\
\mathrm{div}~{\bf u}=0,
\end{array}
\right.
\end{align}
with the same initial data \eqref{I.C.}. The definition of global weak solution to \eqref{Eulereq} and \eqref{I.C.} is given as follows:
\begin{definition}\label{EWD}
For any $  T > 0, $  $ (\rho,{\bf u})(t,x) $ is called a global weak solution on $ [0,T] $
 of \eqref{Eulereq} with initial data $ (\rho_0, {\bf m}_0) $ if $ (\rho,{\bf u}) $ satisfies:
 \begin{itemize}
\item
  $ (\rho,{\bf u}) $ solves the system \eqref{Eulereq} in the sense of distributions in $ [0,T]\times \mathbb{R}^3$ , i.e.\\
(a). For any $ \boldsymbol{\phi} \in C_0^\infty(\mathbb{R}_+ \times \mathbb{R}^3;\mathbb{R}^3) $ such that $ \mathrm{div}~\boldsymbol{\phi}=0,$
 \begin{align}\label{momentum}
  \int _0^T\int _{\mathbb{R}^3} (\sqrt{\rho}\sqrt{\rho} {\bf u} \cdot\boldsymbol{\phi}_t+(\sqrt{\rho} {\bf u} \otimes \sqrt{\rho}{\bf u}): \nabla\boldsymbol{\phi}+\rho \mathbf{f} \cdot\boldsymbol{\phi})\, \mathrm{d}{\bf x}\, \mathrm{d}t +\int_{\mathbb{R}^3}{\bf m}_0({\bf x})\cdot\boldsymbol{\phi}(0,{\bf x})\,\mathrm{d}{\bf x}
   =0,
\end{align}
(b). For any $ \phi \in C_0^\infty(\mathbb{R}_+ \times \mathbb{R}^3;\mathbb{R}) $
\begin{equation}
  \int _0^T\int _{\mathbb{R}^3} (\rho \phi_t+\sqrt{\rho}\sqrt{\rho} {\bf u} \cdot \nabla \phi)\, \mathrm{d}{\bf x}\, \mathrm{d}t+ \int_{\mathbb{R}^3} \rho_0 \phi(0,{\bf x})\, \mathrm{d}{\bf x}=0.
\end{equation}
\item \eqref{I.C.} holds in $ \mathcal{D}'(\mathbb{R}^3), $
\item the energy inequality holds for any $ t \in (0,\infty) $
 \begin{equation}
    \int _{\mathbb{R}^3} \frac{\rho |\bf u|^2}{2}\, \mathrm{d}{\bf x}
   \leq \int _{\mathbb{R}^3} \frac{|{\bf m}_0|^2}{2 \rho_0}\, \mathrm{d}{\bf x} +\int _0^t\int _{\mathbb{R}^3} \rho{\bf u} \cdot {\bf f}\, \mathrm{d}{\bf x}\, \mathrm{d} t.
  \end{equation}
\end{itemize}
\end{definition}
In this paper, we aim to investigate such limits of global weak solutions from the inhomogeneous incompressible Navier-Stokes equations to the corresponding Euler equations under
the weaker  Kolmogorov-type hypothesis, which was particularly motivated by \cite{Chen-Glimm2012} for incompressible fluids and \cite{Chen-Glimm2019} for compressible fluids. 
Compared to these two models, the special features of inhomogeneous incompressible Navier-Stokes equations bring new difficulties to the mathematical analysis. Specifically, on one hand, the pressure-density relation for the compressible flows does not hold for the inhomogeneous flow anymore. On the other hand,
the term $\partial_t(\rho \bf u)$ is nonlinear in $(\rho,\bf u)$ thus requires additional argument to handle the regularity in time, which is not necessary for
 homogeneous incompressible Euler system.

 In fact, the same issue arises in the mathematical study of the energy equality for the weak solutions of compressible or inhomogeneous flows, see reference \cite{CY,FGW, LS,Yu} and their references.
  To circumvent this time regularity assumption,
  one approach is to formulate the equations in terms of the density and energy equality for the momentum $\bf m=\rho \bf u$
and obtain the energy equality by multiplying the momentum equation by $(\rho\bf u)^{\varepsilon}/\rho^{\varepsilon}$ instead of $\bf u^{\varepsilon}$, where $\varepsilon$ is a suitable regulation. However, $(\rho\bf u)^{\varepsilon}/\rho^{\varepsilon}$  cannot keep the divergence free structure, thus requires a commutator estimate involving the pressure and additional regularity of the pressure is needed. In \cite{CY, Yu}, the authors use
 $(\varphi(t) \bf u^{\varepsilon})^{\varepsilon}$ where $\varphi(t)$ is sufficient nice function. This function is divergence free so it does not require any additional regularity of the pressure. It succeeded in 
 removing the difficulty related to the regularity of pressure in these works.
  Such ideas carried into this current paper as well.

 In addition to the special feature of inhomogeneous incompressible Navier-Stokes equations, we are mainly interested in the inviscid limit problem in the whole space.
 We adopt the Fourier integrals to express  the Kolmogorov-type hypothesis in $\mathbb{R}^3 $, which is different from
the case in $ \mathbb{T}_P=[-\frac{P}{2},\frac{P}{2}]^3 , P>0 ,$ in \cite{Chen-Glimm2012,Chen-Glimm2019}.  Our goal is to show that the same conclusion  in \cite{Chen-Glimm2012,Chen-Glimm2019} holds in $\R^3$.
 To this end, we introduce the Kolmogorov-type hypothesis for inhomogeneous incompressible fluids in $\mathbb{R}^3$  using physical notion in \cite{McComb}. The details can be founded in section \ref{section2}.
Accordingly, it is interesting to investigate the inviscid limit  under the weaker version of  Kolmogorov-type hypothesis in $\R^3$.
We present our main result as follows.

\begin{theorem}\label{thm-R1} If the weak solutions $ (\rho^\mu, {\bf u^\mu}) $ of \eqref{maineq}-\eqref{B.C.} as in Definition \ref{defi-1} are under Assumption (RICKHw) \eqref{rass-1}, there exists a subsequence (still denoted) $ (\rho^\mu, {\bf u^\mu}) $ and a function $ (\rho, {\bf u}) $ such that as $ \mu \rightarrow 0,$
\begin{equation}\label{conv}
 \rho^\mu \rightarrow \rho ~~~weakly~in~L^p((0,T)\times \mathbb{R}^3),~
 \sqrt{\rho^\mu}{\bf u^\mu} \rightarrow \sqrt{\rho}{\bf u}~in~L^2((0,T)\times \mathbb{R}^3),
\end{equation}
 where $ 1 < p<\infty $, and $ (\rho, {\bf u}) $ is a weak solution of \eqref{Eulereq} with initial data $ (\rho_0,{\bf m}_0) $.
\end{theorem}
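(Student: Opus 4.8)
The plan is to obtain the inviscid limit by a compactness argument whose uniform-in-$\mu$ estimates come from three sources: the energy inequality, the transport structure of the density, and the fractional spatial regularity supplied by Assumption (RICKHw); the equations themselves then provide the matching regularity in time.

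First I would assemble the uniform bounds. From the energy inequality \eqref{energy}, together with $|{\bf m}_0|^2/\rho_0\in L^1$, ${\bf f}\in L^2((0,T)\times\R^3)$ and a Gronwall argument (using that $\di{\bf u}^\mu=0$ forces $\|\sqrt{\rho^\mu}(t)\|_{L^\infty}=\|\rho_0\|_{L^\infty}^{1/2}$), one gets $\sqrt{\rho^\mu}{\bf u}^\mu$ bounded in $L^\infty(0,T;L^2(\R^3))$ and $\sqrt{\mu}\,\nabla{\bf u}^\mu$ bounded in $L^2((0,T)\times\R^3)$, uniformly in $\mu$; hence $\mu\Delta{\bf u}^\mu=\sqrt{\mu}\,\di(\sqrt{\mu}\,\nabla{\bf u}^\mu)\to0$ in $L^2(0,T;H^{-1})$, so the viscous term on the right of \eqref{momentum for NS} is negligible in the limit. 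Since $\di{\bf u}^\mu=0$, the continuity equation propagates every $L^q$ norm of $\rho^\mu$, so $\rho^\mu$ is bounded in $L^\infty((0,T)\times\R^3)\cap C([0,T];L^p(\R^3))$; consequently $\rho^\mu{\bf u}^\mu=\sqrt{\rho^\mu}\,(\sqrt{\rho^\mu}{\bf u}^\mu)$ and $\rho^\mu{\bf u}^\mu\otimes{\bf u}^\mu=(\sqrt{\rho^\mu}{\bf u}^\mu)\otimes(\sqrt{\rho^\mu}{\bf u}^\mu)$ are bounded in $L^\infty(0,T;L^2)$ and $L^\infty(0,T;L^1)$, and $\rho^\mu{\bf f}$ in $L^2((0,T)\times\R^3)$. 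Finally, Assumption (RICKHw) \eqref{rass-1} yields an exponent $\alpha\in(0,1)$ and a $\mu$-independent bound on $|\nabla|^{\alpha}(\sqrt{\rho^\mu}{\bf u}^\mu)$ in $L^2$; interpolating the resulting $L^2(0,T;H^\alpha_{loc})$ bound with $L^\infty(0,T;L^2)$ and using Sobolev embedding, $\sqrt{\rho^\mu}{\bf u}^\mu$ is bounded in $L^{q_0}_{t,x}$ for some $q_0>2$, so $(\sqrt{\rho^\mu}{\bf u}^\mu)^{\otimes2}$ is bounded in $L^{q_1}_{t,x}$ for some $q_1>1$.

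Next I would pass to weak limits along a subsequence: $\rho^\mu\rightharpoonup\rho$ weakly in $L^p$ and weak-$*$ in $L^\infty$, and $\sqrt{\rho^\mu}{\bf u}^\mu\rightharpoonup{\bf v}$ weakly in $L^2$. The crux is to upgrade the latter to the strong $L^2$ convergence asserted in \eqref{conv}; I would do this with the Aubin--Lions--Simon lemma, combining the spatial regularity $\sqrt{\rho^\mu}{\bf u}^\mu\in L^2(0,T;H^\alpha_{loc})$ with a time estimate. The time estimate is read off from the equations: testing the momentum equation against divergence-free fields removes the pressure and gives $\partial_t\mathbb{P}(\rho^\mu{\bf u}^\mu)$ bounded in $L^{q_1}(0,T;H^{-L}_{loc})$ for some $L$, while $\partial_t\rho^\mu=-\di(\rho^\mu{\bf u}^\mu)$ is bounded in $L^\infty(0,T;H^{-1})$. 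The delicate point — and, I expect, the main obstacle — is that the equations control $\partial_t(\rho^\mu{\bf u}^\mu)$, whereas the Kolmogorov bound controls $\sqrt{\rho^\mu}{\bf u}^\mu$; transferring the time regularity across the factor $\sqrt{\rho^\mu}$, which degenerates on the vacuum set $\{\rho^\mu=0\}$, has to be done uniformly in $\mu$, with the help of the continuity equation and — for the limiting momentum equation and energy inequality — the divergence-free regularization $(\varphi(t){\bf u}^{\va})^{\va}$ of \cite{CY,Yu}, which sidesteps any extra regularity of the pressure. Once this is in place, $\sqrt{\rho^\mu}{\bf u}^\mu\to{\bf v}$ strongly in $L^2_{loc}((0,T)\times\R^3)$.

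It then remains to identify the limit. I would set ${\bf u}:={\bf v}/\sqrt{\rho}$ on $\{\rho>0\}$ and ${\bf u}:=0$ elsewhere (checking first that ${\bf v}=0$ a.e.\ on $\{\rho=0\}$, so that $\sqrt{\rho}{\bf u}={\bf v}$); then, using the strong convergence of $\sqrt{\rho^\mu}{\bf u}^\mu$ and the transport structure of the continuity equation, one passes to the limit in the products: $\rho^\mu{\bf u}^\mu\rightharpoonup\rho{\bf u}$, $(\sqrt{\rho^\mu}{\bf u}^\mu)^{\otimes2}\to(\sqrt{\rho}{\bf u})^{\otimes2}$ in $L^1_{loc}$, and $\rho^\mu{\bf f}\rightharpoonup\rho{\bf f}$. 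Passing to the limit in \eqref{momentum for NS} (the viscous term being negligible), in the continuity equation and in the initial data \eqref{I.C.} yields \eqref{momentum} and the weak continuity equation of Definition \ref{EWD}, so $(\rho,{\bf u})$ is a weak solution of \eqref{Eulereq} with data $(\rho_0,{\bf m}_0)$; the energy inequality passes to the limit by weak lower semicontinuity of $(\rho,{\bf m})\mapsto|{\bf m}|^2/\rho$ together with the strong convergence of $\sqrt{\rho^\mu}{\bf u}^\mu$, which completes the argument.
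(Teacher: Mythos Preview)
Your overall strategy matches the paper's, and you correctly locate the central obstacle: the equations control $\partial_t(\rho^\mu{\bf u}^\mu)$, while the Kolmogorov hypothesis gives spatial regularity on $\sqrt{\rho^\mu}{\bf u}^\mu$. But the Aubin--Lions scheme you sketch does not close. A bound on $\partial_t\mathbb{P}(\rho^\mu{\bf u}^\mu)$ in $L^{q_1}(0,T;H^{-L}_{loc})$, combined with an $H^\alpha$ bound on the \emph{same} quantity, would give compactness of $\mathbb{P}(\rho^\mu{\bf u}^\mu)$; however the $H^\alpha$ bound from (RICKHw) is on $\sqrt{\rho^\mu}{\bf u}^\mu$, not on $\rho^\mu{\bf u}^\mu$, and there is no direct bound on $\partial_t(\sqrt{\rho^\mu}{\bf u}^\mu)$ in any negative space, because writing out that derivative produces a factor $1/\sqrt{\rho^\mu}$ which is uncontrolled near vacuum. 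You name the difficulty but do not supply the mechanism that resolves it; ``has to be done uniformly in $\mu$'' is exactly the missing lemma.

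The paper does not use Aubin--Lions. It proves $L^2$ time-equicontinuity of $\sqrt{\rho^\mu}{\bf u}^\mu$ directly (Lemma~\ref{lem-R4}), starting from the algebraic identity
\[
|\sqrt{\rho}{\bf u}(t+\Delta t)-\sqrt{\rho}{\bf u}(t)|^2
=[\rho{\bf u}(t+\Delta t)-\rho{\bf u}(t)]\cdot[{\bf u}(t+\Delta t)-{\bf u}(t)]
+\big(\text{terms in }\sqrt{\rho}(t+\Delta t)-\sqrt{\rho}(t)\big).
\]
The second group is handled using $\sqrt{\rho^\mu}\in C([0,T];L^p)$. For the first, one replaces ${\bf u}(t+\Delta t)-{\bf u}(t)$ by $\varphi(t){\bf u}^\epsilon(t+\Delta t)-\varphi(t){\bf u}^\epsilon(t)$, which is divergence-free; this is where the regularization from \cite{CY,Yu} enters, and it allows the weak momentum equation to be invoked \emph{without} the pressure, after which one balances $\epsilon$ against $\Delta t$. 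In other words, the divergence-free test function is the engine of the \emph{compactness} step itself, not, as you wrote, of the passage to the limit in the equations; once strong $L^2$ convergence of $\sqrt{\rho^\mu}{\bf u}^\mu$ is established, the limit in \eqref{momentum for NS}--\eqref{density equation} is routine and needs no special test function. Together with the spatial equicontinuity coming from Lemma~\ref{lem-R1}, the Fr\'echet--Kolmogorov criterion yields Proposition~\ref{proposition1}.
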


\begin{remark}\label{rem:3}
The inviscid limit for compressible Navier-Stokes equations in $\mathbb{R}^3$ for any $\gamma>\frac{3}{2}$ can be obtained by the same method as this paper. However, for homogeneous incompressible Navier-Stokes equations, the total energy $ \mathcal{E}(t) $ per unit mass vanishes as defined in Kolmogorov hypothesis in \cite{McComb}.
\end{remark}

Meanwhile, we can establish a similar result in $ \mathbb{T}_P=[-\frac{P}{2},\frac{P}{2}]^3 , P>0.$ The proof will be given in Appendix at the end of this paper.
\begin{theorem}\label{rem:2}
If we consider the fluid in a domain with period $\mathbb{T}_P=[-P/2,P/2]^3 \subset \mathbb{R}^3, P>0,$ one can deduce the same result as Theorem \ref{thm-R1}. However, in this case, the Kolmogorov hypothesis is deduced as follows:
 \begin{equation}\label{w}
   \sup_{k \geq k_{\ast}} \Big(|\mathbf{k}|^{3+ \beta}\int_0^T|\widehat{\sqrt{\rho}
  {\bf u}}(t,\mathbf{k})|^2\mathrm{d}t \Big)\leq C_T,~~for~some~\beta>0   
  \end{equation}
different from Assumption (RICKHw). In fact, for the domain $\mathbb{T}_P $, the total energy $ \mathcal{E}(t) $ per unit mass satisfies
\begin{equation}\label{Fourier}
  \mathcal{E}(t)=\frac{1}{\int_{\mathbb{T}_P} \rho^\mu \mathrm{d}{\bf x} }\int _{\mathbb{T}_P} \frac{\rho^\mu |{\bf u^\mu}|^2}{2}\, \mathrm{d}{\bf x}=\sum_{k \geq 0}E(t,k)= \sum_{k \geq 0}4 \pi q(t,k)k^2,
\end{equation}
and the weighted velocity $\sqrt{\rho}{\bf u}(t,{\bf x}) $ can be expanded to Fourier series, i.e.,
\begin{equation}\label{RParseval}
  \sqrt{\rho}{\bf u}(t,{\bf x})=\sum_{\bf k} \widehat{\sqrt{\rho}{\bf u}}(t,{\bf k})e^{i {\bf k} \cdot {\bf x}}.
\end{equation}
The proof highly relies on \eqref{w} and \eqref{RParseval}, see Appendix \ref{sec:appendix}.
\end{theorem}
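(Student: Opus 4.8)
The plan is to run the same vanishing-viscosity compactness scheme as in the proof of Theorem~\ref{thm-R1}, the only structural change being that the Fourier-integral form of the Kolmogorov hypothesis in $\R^3$ is replaced by the Fourier-series bound \eqref{w} together with the Parseval identity \eqref{RParseval}; on the bounded cell $\mathbb{T}_P$ the relevant compact embedding $H^\alpha(\mathbb{T}_P)\hookrightarrow\hookrightarrow L^2(\mathbb{T}_P)$ (for $\alpha>0$) is in fact more elementary than its whole-space analogue, and the details are carried out in Appendix~\ref{sec:appendix}. First I would extract the uniform a priori bounds: from the energy inequality \eqref{energy}, Young's inequality applied to $\int\rho^\mu{\bf u}^\mu\cdot{\bf f}$, Gr\"onwall, and the fact that the density is transported by the divergence-free field ${\bf u}^\mu$ (so $\|\rho^\mu(t)\|_{L^p(\mathbb{T}_P)}=\|\rho_0\|_{L^p}$), one obtains, uniformly in $\mu\in(0,1]$, that $\sqrt{\rho^\mu}{\bf u}^\mu$ is bounded in $L^\infty(0,T;L^2(\mathbb{T}_P))$, $\sqrt\mu\,\nabla{\bf u}^\mu$ is bounded in $L^2((0,T)\times\mathbb{T}_P)$, and $\rho^\mu$ is bounded in $L^\infty((0,T)\times\mathbb{T}_P)$.

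Next I would feed \eqref{w} (understood uniformly in $\mu$) into \eqref{RParseval}: writing $\int_0^T\|\sqrt{\rho^\mu}{\bf u}^\mu(t)\|_{H^\alpha(\mathbb{T}_P)}^2\,dt=\sum_{\bf k}(1+|{\bf k}|^2)^\alpha\int_0^T|\widehat{\sqrt{\rho^\mu}{\bf u}^\mu}(t,{\bf k})|^2\,dt$ and splitting over $|{\bf k}|<k_\ast$ and $|{\bf k}|\ge k_\ast$, the tail is dominated by $C_T\sum_{|{\bf k}|\ge k_\ast}|{\bf k}|^{2\alpha-3-\beta}$, which converges for every $\alpha\in(0,\beta/2)$; hence $\sqrt{\rho^\mu}{\bf u}^\mu$ is bounded in $L^2(0,T;H^\alpha(\mathbb{T}_P))$ independently of $\mu$, and, interpolating with the $L^\infty_tL^2_x$ bound and using Sobolev embedding, also in $L^{p_0}(0,T;L^{q_0}(\mathbb{T}_P))$ for some $p_0,q_0>2$. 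I would then establish the time regularity needed for compactness: setting ${\bf m}^\mu:=\rho^\mu{\bf u}^\mu=\sqrt{\rho^\mu}\,(\sqrt{\rho^\mu}{\bf u}^\mu)$, bounded in $L^\infty(0,T;L^2(\mathbb{T}_P))$, testing the momentum equation in \eqref{maineq} against divergence-free fields makes $\nabla p$ drop out, and since ${\bf m}^\mu\otimes{\bf u}^\mu=(\sqrt{\rho^\mu}{\bf u}^\mu)\otimes(\sqrt{\rho^\mu}{\bf u}^\mu)$ is bounded in $L^{p_0/2}_tL^{q_0/2}_x$ with exponents $>1$, $\mu\nabla{\bf u}^\mu=\sqrt\mu\,(\sqrt\mu\,\nabla{\bf u}^\mu)\to0$ in $L^2$, and $\rho^\mu{\bf f}$ is bounded in $L^2$, the Leray projection $\mathbb{P}{\bf m}^\mu$ has $\partial_t(\mathbb{P}{\bf m}^\mu)$ bounded in $L^r(0,T;H^{-s}(\mathbb{T}_P))$ for some $r>1$, $s$ large. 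The continuity equation gives $\partial_t\rho^\mu=-\mathrm{div}\,{\bf m}^\mu$ bounded in $L^\infty(0,T;H^{-1}(\mathbb{T}_P))$, which with the $L^\infty$ bound yields $\rho^\mu\to\rho$ in $C([0,T];L^p(\mathbb{T}_P)\text{-weak})$, hence $\rho^\mu\rightharpoonup\rho$ weakly in $L^p((0,T)\times\mathbb{T}_P)$ for all $1<p<\infty$.

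With these ingredients, combining the spatial compactness from $H^\alpha(\mathbb{T}_P)\hookrightarrow\hookrightarrow L^2(\mathbb{T}_P)$ for $\sqrt{\rho^\mu}{\bf u}^\mu$, the temporal equicontinuity of ${\bf m}^\mu$, and the density convergence, an Aubin--Lions--Simon / Lions-type compactness argument for $\sqrt\rho\,{\bf u}$ should give, along a subsequence, $\sqrt{\rho^\mu}{\bf u}^\mu\to\sqrt\rho\,{\bf u}$ strongly in $L^2((0,T)\times\mathbb{T}_P)$, once $\rho{\bf u}:=\lim\rho^\mu{\bf u}^\mu$ (which vanishes a.e.\ on $\{\rho=0\}$) and ${\bf u}:=(\rho{\bf u})/\rho$ on $\{\rho>0\}$ have been identified. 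Passage to the limit is then routine: the convective term passes because $(\sqrt{\rho^\mu}{\bf u}^\mu)\otimes(\sqrt{\rho^\mu}{\bf u}^\mu)\to(\sqrt\rho{\bf u})\otimes(\sqrt\rho{\bf u})$ in $L^1_{\mathrm{loc}}$ (using $\|a_\mu\otimes a_\mu-a\otimes a\|_{L^1}\le\|a_\mu-a\|_{L^2}\|a_\mu+a\|_{L^2}$ with $a_\mu=\sqrt{\rho^\mu}{\bf u}^\mu$), the viscous term satisfies $\mu\nabla{\bf u}^\mu\cdot\nabla\boldsymbol\phi\to0$, the forcing and continuity terms pass by the weak $L^p$ convergence of $\rho^\mu$, and weak lower semicontinuity handles the energy inequality; this produces a pair $(\rho,{\bf u})$ satisfying \eqref{momentum}, the continuity equation, and the energy inequality of Definition~\ref{EWD}. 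The only genuinely new input relative to the whole-space case is the observation that $\mathcal{E}(t)$ decomposes as in \eqref{Fourier} and $\sqrt\rho{\bf u}$ expands as in \eqref{RParseval}, which is what makes \eqref{w} the correct substitute for Assumption~(RICKHw).

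The hard part will be the compactness step: obtaining strong $L^2$ convergence of $\sqrt{\rho^\mu}{\bf u}^\mu$ and correctly identifying $\lim\rho^\mu{\bf u}^\mu$ with $\sqrt\rho\cdot\lim\sqrt{\rho^\mu}{\bf u}^\mu$ \emph{without} any strong compactness of $\rho^\mu$, in the presence of the nonlinear, non-divergence-free time derivative $\partial_t(\rho{\bf u})$ and with no a priori regularity of the pressure. This is precisely where the extra fractional regularity $\sqrt{\rho^\mu}{\bf u}^\mu\in L^2_tH^\alpha_x$ furnished by \eqref{w} is indispensable, and where, as anticipated in the introduction, one tests against a mollified divergence-free field of the form $(\varphi(t){\bf u}^\varepsilon)^\varepsilon$ so as to avoid any commutator involving the pressure, exactly as in the proof of Theorem~\ref{thm-R1}.
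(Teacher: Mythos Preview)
Your closing paragraph is right and is essentially what the paper does: the argument of Theorem~\ref{thm-R1} is rerun with the Fourier-series hypothesis \eqref{w} in place of (RICKHw), and the decisive step is testing the momentum equation against the divergence-free field $\varphi(t){\bf u}^\epsilon$ so that the pressure never appears. The concrete scheme you lay out in the middle, however---extracting time regularity of $\mathbb{P}{\bf m}^\mu$ via the Leray projector and then invoking an Aubin--Lions--Simon lemma---does not close as written. Since $\mathrm{div}(\rho^\mu{\bf u}^\mu)=-\partial_t\rho^\mu\neq0$ in general, one has ${\bf m}^\mu\neq\mathbb{P}{\bf m}^\mu$, and $\partial_t\big((I-\mathbb{P}){\bf m}^\mu\big)$ contains precisely the uncontrolled term $\nabla p$; so you obtain temporal equicontinuity of $\mathbb{P}{\bf m}^\mu$ only. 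Aubin--Lions then cannot be applied to $\sqrt{\rho^\mu}{\bf u}^\mu$: the spatial $H^\alpha$ bound lives on $\sqrt{\rho^\mu}{\bf u}^\mu$ while the time bound lives on the different sequence $\mathbb{P}{\bf m}^\mu$, and no standard version of the lemma bridges that mismatch without strong compactness of $\rho^\mu$, which (as you note) is unavailable.

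The paper therefore bypasses Aubin--Lions entirely and proves $L^2$-equicontinuity of $\sqrt{\rho^\mu}{\bf u}^\mu$ in $t$ directly (Lemma~\ref{lem-4}, the torus analogue of Lemma~\ref{lem-R4}): one expands $|\sqrt\rho{\bf u}(t+\Delta t)-\sqrt\rho{\bf u}(t)|^2$ as $[\rho{\bf u}(t+\Delta t)-\rho{\bf u}(t)]\cdot[{\bf u}(t+\Delta t)-{\bf u}(t)]$ plus cross terms involving $\sqrt\rho(t+\Delta t)-\sqrt\rho(t)$, replaces ${\bf u}$ by $\varphi(t){\bf u}^\epsilon$ in the second factor so the weak momentum formulation with a divergence-free test function applies, and then balances $\epsilon$ against $\Delta t$. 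Combined with the spatial equicontinuity from the $H^\alpha$ bound (your range $\alpha<\beta/2$ is indeed the correct one for the three-dimensional lattice tail $\sum_{|{\bf k}|\ge k_\ast}|{\bf k}|^{2\alpha-3-\beta}$), the Riesz--Fr\'echet--Kolmogorov criterion delivers the strong $L^2$ convergence. On $\mathbb{T}_P$ the argument in fact simplifies relative to $\R^3$: the splitting ${\bf u}={\bf u}1_{\{\rho<\delta_0\}}+{\bf u}1_{\{\rho\ge\delta_0\}}$ used in Lemma~\ref{lem-R4} to control integrability at infinity is no longer needed.
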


In general, the global existence theory of weak solutions to Euler equations in three dimensional space has not been established yet.
We make our attempt to find a way of obtaining existence result of the inhomogeneous Euler equations under special conditions. In general, the vanishing viscosity limit of Navier-Stokes equations is not a solution to the corresponding Euler equations, since the uniform bounds cannot guarantee the convergence of the nonlinear term $\rho^{\mu}\bf u^{\mu}\otimes\bf u^{\mu}$. 

  In order to pass the limit of the nonlinear term $ \rho^\mu {\bf u}^\mu\otimes  {\bf u}^\mu,$  if we follow the techniques in   \cite{Chen-Glimm2019} to show strong convergene of $\rho^{\mu}\bf u^{\mu}$, the additional regularity on the pressure is required. It seems not easy to obtain such a regularity for the weak solutions. However, we do not have a pressure law for inhomogeneous fluids, and the velocity is not good enough to be a test function even after smoothing. 
 Our key idea is to show the strong convergence of $\sqrt{\rho^{\mu}}\bf u^{\mu}$ in $L^2_{t,x}$, this yields the weak limit of a subsequence is a weak solution to the Euler equations. We emphasize the fact that our argument does not need any additional regularity on the pressure, since we find a suitable divergence free test function. This yields our Lemma \ref{lem-R4} so as to show the strong convergence of $\sqrt{\rho^{\mu}}\bf u^{\mu}.$

The manuscript is organized as follows. In section 2, we present the  Kolmogorov hypotheses for the inhomogeneous incompressible fluids in the whole space, and  a sufficient weaker version of Assumption (RICKHw) for this current project. In section 3, we derive the compactness of weak solutions of the Navier-Stokes equations when the viscosity coefficient vanishes, which is crucial to obtain the weak solution to the Euler equations. In Section 4, we prove our main result based on the compactness results in Section 3. We give an outline of the proof to Theorem 1.5  in Section 5 as an appendix.

\section{The Kolmogorov hypotheses for the inhomogeneous incompressible fluids}\label{section2}
In this section, we introduce the Kolmogorov hypotheses for the inhomogeneous incompressible fluids in $\mathbb{R}^3$ and the corresponding Kolmogorov-type hypothesis (RICKH) in mathematical terms. Note that,
two fundamental assumptions for the isotropic incompressible turbulence were proposed by  Kolmogorov  \cite{Kolmogorov2,Kolmogorov1}:
\begin{itemize}
\item[(i)]~At sufficiently high wavenumbers, the energy spectrum $ E(t,k) $ can depend only on the fluid viscosity $ \mu, $ the dissipation rate $ \varepsilon $ and the wavenumber $ k $ itself.\\
\item[(ii)]~$ E(t,k) $ should be independent of the viscosity as the Reynolds number tends to infinity:
\begin{equation*}
E(t,k) \approx \alpha \varepsilon^{2/3}k^{-3/5}
\end{equation*}
in the limit of infinite Reynolds number, where $ \alpha $ may depend on t, but is independent of $ k, \varepsilon. $
\end{itemize}

Under  the above Kolmogorov's two hypotheses, Chen-Glimm  \cite{Chen-Glimm2012,Chen-Glimm2019} interpreted in mathematical terms for the incompressible and compressible Kolmogorov-type hypothesis in $\mathbb{T}_P$.
We can generalize them to $\R^n$ for the fluid equations as follows.
 \begin{assr}\label{ickh}
 For any $ T >0, $ there exists $ C_T>0$ and $ k_{*} $(sufficiently large) depending on $ \rho_0, {\bf m}_0 $ and $ {\bf f} $ but independent of the viscosity $ \mu $ such that, for $ k=|{\bf k}|\geq k_* ,$
 \begin{equation}
  \int_0^T E(t,{\bf k}) \mathrm{d}t \leq C_T k^{-\frac{5}{3}}.
 \end{equation}
 \end{assr}
On  the one hand, McComb \cite{McComb} defined the spectral tensor $ Q_{\alpha \beta}(t,{\bf k}) $, and stated the relationship between the trace of $ Q_{\alpha \beta}(t,{\bf k})  $ and the energy  $ \mathcal{E}(t) $ per unit mass of fluid at time t, i.e.
\begin{equation}\label{trace}
  2\mathcal{E}(t)=trQ_{\alpha \beta}(t,{\bf r})|_{r=0}=tr \int _0^\infty q(t,k)k^2 \mathrm{d}k \int D_{\alpha \beta}({\bf k})\mathrm{d} \Omega_{\bf k},
\end{equation}
where $D_{\alpha \beta}({\bf k}) $, $q(t,k)  $ and $\mathrm{d} \Omega_{\bf k}  $ denote projection operator, spectral density and the elementary solid angle in wavenumber space respectively.
Using \eqref{trace}, Leslie \cite{LS1983} obtained the following crucial result
\begin{align}\label{E}
  \mathcal{E}(t)=&\frac{4 \pi}{3}tr \delta_{\alpha \beta} \int_0^\infty q(t,k)k^2 \mathrm{d}k \nonumber \\ 
   =&\int_0^\infty 4 \pi k^2 q(t,k) \mathrm{d}k \nonumber \\
   =&\int_0^\infty E(t,k) \mathrm{d}k.
 \end{align}

On the other hand, from the energy inequality \eqref{energy} of the weak solutions $ (\rho^\mu, {\bf u^\mu}) $, the Gronwall inequality yields to 
  \begin{equation}\label{Energy}
    \int _{\mathbb{R}^3} \frac{\rho^\mu |u^\mu|^2}{2}\, \mathrm{d}{\bf x}
    +\int _0^t\int _{\mathbb{R}^3} \mu|\nabla {\bf u^\mu}|^2\, \mathrm{d}{\bf x}\, \mathrm{d} t \leq M_T,
  \end{equation}
where $ M_T $ is a positive constant that depends on intial data, {\bf f} and T, but independent of $ \mu $. By \eqref{E} and \eqref{Energy},  the total energy $ \mathcal{E}(t) $ per unit mass at time t for the inhomogeneous turbulence is:
\begin{equation}
  \mathcal{E}(t)=\frac{1}{\int _{\mathbb{R}^3} \rho^\mu\mathrm{d}x}\int _{\mathbb{R}^3} \frac{\rho^\mu |{\bf u^\mu}|^2}{2}\, \mathrm{d}{\bf x}=\int _0^\infty E(t,k)\mathrm{d}k= \int _0^\infty 4 \pi q(t,k)k^2\mathrm{d}k.
\end{equation}

Furthermore, when we consider the domain $\mathbb{T}_P=[-P/2,P/2]^3 \subset \mathbb{R}^3, P>0,$ the wavevector $ {\bf k}=(k_1,k_2,k_3)=\frac{2 \pi}{P}(n_1,n_2,n_3) \in \mathbb{R}^3, $ with $ n_j=0, \pm1, \pm2,\cdots $ and $ j=1,2,3, $ is discrete.
When the setting is in  $ \mathbb{R}^3 $, in some sense, we can view that it is the large box limit as $ P \rightarrow \infty $. Since the wavevector is continuous, 
 we introduce the  Fourier transform for  the weighted velocity $\sqrt{\rho}{\bf u}(t,{\bf x}) $ in the {\bf x}-variable as follows
\begin{equation}
 \widehat{\sqrt{\rho} {\bf u}}(t,\mathbf{k}) = \frac{1}{(2 \pi)^3}\int _{\mathbb{R}^3} \sqrt{\rho} {\bf u}(t,{\bf x})e^{-i{\bf k} \cdot {\bf x}}\, \mathrm{d}{\bf x},
\end{equation}
thus
\begin{equation}
  \sqrt{\rho} {\bf u}(t,\mathbf{x})=\int _{\mathbb{R}^3} \widehat{\sqrt{\rho} {\bf u}}(t,{\bf k})
  e^{i{\bf k} \cdot {\bf x}}\, \mathrm{d}{\bf k}.
\end{equation}
By Parseval identity, we have
\begin{equation}\label{Parseval}
 \|\sqrt{\rho^\mu}u^\mu\|_{L^2_{\bf x}}^2=\|\widehat{\sqrt{\rho^\mu}u^\mu}\|_{L^2_{\bf k}}^2 .
\end{equation}
Clearly, it is usually more convenient to use the spherical  coordinates $ (k,\theta, \varphi)$, where $$ k=|{\bf k}|, \,0 \leq \theta \leq 2\pi, 0 \leq \varphi \leq  \pi.$$ Note that the facts  $$\widehat{\sqrt{\rho^\mu}{\bf u}^\mu}(t,{\bf k})=\widehat{\sqrt{\rho^\mu}{\bf u}^\mu}(t,k,\theta,\varphi),$$and \eqref{Parseval}, we derive that
\begin{align}\label{spherical}
 \int _0^\infty E(t,k)~\mathrm{d}k=&\frac{1}{\int _{\mathbb{R}^3} \rho^\mu\mathrm{d}x}\int _{\mathbb{R}^3} \frac{\rho^\mu |{\bf u^\mu}|^2}{2}\, \mathrm{d}{\bf x} \nonumber \\
 =&\frac{1}{2\int _{\mathbb{R}^3} \rho_0\mathrm{d}x}\int _{\mathbb{R}^3} |\widehat{\sqrt{\rho^\mu} {\bf u^\mu}}(t,{\bf k})|^2\, \mathrm{d}{\bf k} \nonumber \\ 
 =&\frac{1}{2\int _{\mathbb{R}^3} \rho_0\mathrm{d}x}\int _0^{\pi} \int _0^{2 \pi} \int _0^\infty |\widehat{\sqrt{\rho^\mu}{\bf u}^\mu(t,k,\theta,\varphi)}|^2~ k^2 \sin \varphi\, \mathrm{d}k~\mathrm{d}\theta~\mathrm{d}\varphi,
\end{align}
where we used  $$\int_{\R^3}\rho^{\mu}({\bf x},t)~\mathrm{d}{\bf x} = \int_{\R^3}\rho_0({\bf x})~\mathrm{d}{\bf x}, $$  which can be derived from the mass equation in \eqref{maineq}.

Thanks to {\sl Assumption (RICKH)}, $E(t,{\bf k})=E(t,k,\theta,\varphi) $ and the equality \eqref{spherical}, we are able to obtain the following weaker version of {\sl Assumption (RICKHw)}.  This can provide the uniform bound for our compactness analysis in this current paper.

 \begin{assrw}\label{rickhw}
For any $ T>0, $ there exists $ C_T>0 $ and $ k_{\ast} $(sufficiently large)  depending on $ \rho_0,{\bf m}_0 $ and $ \mathbf{f} $ but independent of the viscosity $\mu $ such that, for $ k=|\mathbf{k}|\geq k_ {\ast},$
\begin{equation}\label{rass-1}
	\sup_{k \geq k_{\ast}} \Big(|\mathbf{k}|^{3+ \beta}\int_0^T\int _0^{\pi} \int _0^{2 \pi} |\widehat{\sqrt{\rho^\mu}
	{\bf u}^\mu}(t,k,\theta,\varphi)|^2~ k^2 \sin \varphi\,\mathrm{d}\theta~\mathrm{d}\varphi~\mathrm{d}t \Big)\leq C_T,~~for~some~\beta>0.
\end{equation}
 \end{assrw}
\section{Compactness of weak solutions }\label{section3}

We develop the compactness of weak solutions when the viscosity coefficient vanishes in this section.
The following lemma is crucial to show the compactness of weak solutions.
\begin{lemma}\label{lem-R1}Under Assumption (RICKHw), for any $ T \in (0, +\infty), $ there exists $C>0,$
  independent of $ \mu>0, $ such that
 ~ \begin{equation}
 	\int _0^T\int _{\mathbb{R}^3} |D_x^{\alpha}( \sqrt{\rho^\mu} {\bf u}^\mu)|^2\, \mathrm{d}{\bf x} \, \mathrm{d} t \leq C,
\end{equation}
where $ \alpha \in(0, 1+\frac{\beta}{2}), $
$ C$ the generic positive constants depending on
$ \rho_0, {\bf m}_0, \mathbf{f}, \alpha, k_ \ast$ and $ T>0, $ but independent of $ \mu>0. $
 \end{lemma}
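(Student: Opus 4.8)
The plan is to bound the fractional Sobolev norm $\int_0^T \|D_x^\alpha(\sqrt{\rho^\mu}\mathbf{u}^\mu)\|_{L^2_{\mathbf x}}^2\,\mathrm{d}t$ by splitting the frequency space into low and high wavenumbers at the threshold $k_\ast$ from Assumption (RICKHw), and estimating each piece separately using the Plancherel/Parseval identity \eqref{Parseval}. Concretely, by Plancherel,
\[
\int_0^T\int_{\mathbb{R}^3}|D_x^\alpha(\sqrt{\rho^\mu}\mathbf{u}^\mu)|^2\,\mathrm{d}{\bf x}\,\mathrm{d}t
= C\int_0^T\int_{\mathbb{R}^3}|{\bf k}|^{2\alpha}\,|\widehat{\sqrt{\rho^\mu}\mathbf{u}^\mu}(t,{\bf k})|^2\,\mathrm{d}{\bf k}\,\mathrm{d}t,
\]
and I would write $\mathbb{R}^3 = \{|{\bf k}|\le k_\ast\}\cup\{|{\bf k}|>k_\ast\}$.

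For the low-frequency part $\{|{\bf k}|\le k_\ast\}$, I would use $|{\bf k}|^{2\alpha}\le k_\ast^{2\alpha}$ to pull out the weight, leaving $k_\ast^{2\alpha}\int_0^T\int_{|{\bf k}|\le k_\ast}|\widehat{\sqrt{\rho^\mu}\mathbf{u}^\mu}|^2\,\mathrm{d}{\bf k}\,\mathrm{d}t \le k_\ast^{2\alpha}\int_0^T\|\sqrt{\rho^\mu}\mathbf{u}^\mu\|_{L^2_{\bf x}}^2\,\mathrm{d}t$, which is controlled by $T$ times the uniform energy bound \eqref{Energy} (recall $\int_{\mathbb{R}^3}\rho^\mu|\mathbf{u}^\mu|^2\,\mathrm{d}{\bf x}\le 2M_T$, so $\|\sqrt{\rho^\mu}\mathbf{u}^\mu\|_{L^2_{\bf x}}^2\le 2M_T$), giving a bound $2Tk_\ast^{2\alpha}M_T$ independent of $\mu$. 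For the high-frequency part, I would pass to spherical coordinates $(k,\theta,\varphi)$ as in \eqref{spherical}, so that
\[
\int_0^T\int_{|{\bf k}|>k_\ast}|{\bf k}|^{2\alpha}|\widehat{\sqrt{\rho^\mu}\mathbf{u}^\mu}|^2\,\mathrm{d}{\bf k}\,\mathrm{d}t
= \int_{k_\ast}^\infty k^{2\alpha}\Big(\int_0^T\!\!\int_0^\pi\!\!\int_0^{2\pi}|\widehat{\sqrt{\rho^\mu}\mathbf{u}^\mu}(t,k,\theta,\varphi)|^2 k^2\sin\varphi\,\mathrm{d}\theta\,\mathrm{d}\varphi\,\mathrm{d}t\Big)k^{-2}\,\mathrm{d}k.
\]
Here the inner triple integral is exactly the quantity appearing in \eqref{rass-1}, which Assumption (RICKHw) bounds by $C_T\,|{\bf k}|^{-(3+\beta)} = C_T k^{-(3+\beta)}$. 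Substituting, the high-frequency contribution is at most $C_T\int_{k_\ast}^\infty k^{2\alpha-2-3-\beta}\,\mathrm{d}k = C_T\int_{k_\ast}^\infty k^{2\alpha-5-\beta}\,\mathrm{d}k$, and this integral converges precisely when $2\alpha - 5 - \beta < -1$, i.e. $\alpha < 1+\frac{\beta}{2}$, which is the stated range; its value is $C_T\,k_\ast^{2\alpha-4-\beta}/(4+\beta-2\alpha)$, again independent of $\mu$. Adding the two pieces yields the constant $C$ depending only on $\rho_0,{\bf m}_0,\mathbf{f},\alpha,k_\ast,T$.

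The only genuinely delicate point is making sure the spherical-coordinate rewriting of the weight is legitimate and that the integral $\int_{k_\ast}^\infty k^{2\alpha-5-\beta}\,\mathrm{d}k$ is finite — that is where the hypothesis $\alpha < 1+\frac{\beta}{2}$ is used and must not be dropped. Everything else is a routine application of Plancherel and the two a priori bounds already available (the energy estimate \eqref{Energy} and Assumption (RICKHw)); no additional regularity of the pressure or of $\rho^\mu$ is needed. I would present the argument in exactly the order above: Plancherel identity, low-frequency bound via the energy estimate, high-frequency bound via (RICKHw) in spherical coordinates, then combine.
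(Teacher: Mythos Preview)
Your overall strategy is exactly the paper's: apply Plancherel, split at the threshold $k_\ast$, control the low-frequency piece by the energy bound \eqref{Energy}, and control the high-frequency piece via Assumption (RICKHw) in spherical coordinates. However, there is a bookkeeping error in your spherical-coordinate identity. When you pass to spherical coordinates, the volume element is $\mathrm{d}{\bf k}=k^{2}\sin\varphi\,\mathrm{d}k\,\mathrm{d}\theta\,\mathrm{d}\varphi$, so
\[
\int_0^T\!\!\int_{|{\bf k}|>k_\ast}\!|{\bf k}|^{2\alpha}|\widehat{\sqrt{\rho^\mu}\mathbf{u}^\mu}|^2\,\mathrm{d}{\bf k}\,\mathrm{d}t
=\int_{k_\ast}^\infty k^{2\alpha}\Big(\int_0^T\!\!\int_0^{\pi}\!\!\int_0^{2\pi}|\widehat{\sqrt{\rho^\mu}\mathbf{u}^\mu}|^2 k^{2}\sin\varphi\,\mathrm{d}\theta\,\mathrm{d}\varphi\,\mathrm{d}t\Big)\,\mathrm{d}k,
\]
with \emph{no} additional factor $k^{-2}$ outside the bracket. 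The bracket is already the quantity bounded by $C_T k^{-(3+\beta)}$ in \eqref{rass-1}, so the high-frequency contribution is at most $C_T\int_{k_\ast}^\infty k^{2\alpha-3-\beta}\,\mathrm{d}k$, which converges exactly when $2\alpha-3-\beta<-1$, i.e.\ $\alpha<1+\tfrac{\beta}{2}$; this is precisely the paper's computation.

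Your displayed formula carries an extra $k^{-2}$, giving the wrong exponent $2\alpha-5-\beta$; and your subsequent implication ``$2\alpha-5-\beta<-1$, i.e.\ $\alpha<1+\tfrac{\beta}{2}$'' is itself an arithmetic slip (it would give $\alpha<2+\tfrac{\beta}{2}$). The two errors happen to cancel, so you land on the correct range for $\alpha$, but the intermediate computation should be corrected as above. Once that is fixed, your proof matches the paper's line for line.
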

 
 \begin{remark}
  A similar version for periodic domain case were given in \cite{Chen-Glimm2012, Chen-Glimm2019}, where the proof relied on the definition of fractional derivatives via Fourier series.
 \end{remark}
\begin{proof} Note that the definition of fractional derivatives via Fourier transform, the Parseval identity,  the spherical  coordinates  and {\sl Assumption (RICKHw)}, we have
\begin{align}
	\int _0^T\int _{\mathbb{R}^3} |D_x^{\alpha}( \sqrt{\rho^\mu} {\bf u}^\mu)(t,{\bf x})|^2\, \mathrm{d}{\bf x}\, \mathrm{d} t
	=&\int _0^T \int_{\mathbb{R}^3} |\widehat{ D_x^\alpha( \sqrt{\rho^\mu} {\bf u}^\mu)}(t,{\bf k})|^2\, \mathrm{d}{\bf k}\mathrm{d}t \nonumber \\
	=& \int _0^T \int_{\mathbb{R}^3}|{\bf k}|^{2 \alpha} |\widehat{ \sqrt{\rho^\mu} {\bf u}^\mu}(t,{\bf k})|^2\, \mathrm{d}{\bf k}\mathrm{d}t \nonumber \\
	=& \int _0^T \int _0^{\pi} \int _0^{2 \pi} \int _0^\infty k^{2 \alpha} |\widehat{ \sqrt{\rho^\mu} {\bf u}^\mu}(t, k, \theta,\varphi)|^2k^2 \sin \varphi\, \mathrm{d}k~\mathrm{d}\theta~\mathrm{d}\varphi ~\mathrm{d}t\nonumber \\
	=& \int _0^T \int _0^{\pi} \int _0^{2 \pi}\int _0^ {k_{\ast}} k^{2 \alpha}|\widehat{\sqrt{\rho^\mu}{\bf u}^\mu}|^2 k^2 \sin \varphi\, \mathrm{d}k~\mathrm{d}\theta~\mathrm{d}\varphi ~\mathrm{d}t\nonumber \\
	&+\int _0^T \int _0^{\pi} \int _0^{2 \pi}\int _{k_{\ast}}^\infty k^{2 \alpha}|\widehat{\sqrt{\rho^\mu}{\bf u}^\mu}|^2 k^2 \sin \varphi\, \mathrm{d}k~\mathrm{d}\theta~\mathrm{d}\varphi ~\mathrm{d}t\nonumber \\
	 \leq&	C k_ \ast^{2 \alpha}\int _0^T\int _{\mathbb{R}^3} |\sqrt{\rho^\mu} {\bf u}^\mu|^2\, \mathrm{d}{\bf x}\, \mathrm{d}t+ C\int _{k_{\ast}}^\infty k^{2 \alpha-3- \beta} \, \mathrm{d}k \nonumber \\
	 \leq& C,
\end{align}	
where $ \alpha<1+\frac{\beta}{2}. $
\end{proof}

With Lemma \ref{lem-R1} at hand, we are able to have the following uniform bound of $ \sqrt{\rho^\mu}{\bf u}^\mu $ in $ \mu. $
\begin{corollary}\label{cor-2}
Under {\sl Assumption (RICKHw)}, for any $ T >0, $ there exists $C>0,$ independent of $ \mu>0, $ such that
for $ q=q(\beta)>2, $
~\begin{equation}
	\|\sqrt{\rho^\mu}{\bf u}^\mu\|_{ L^q((0,T)\times \mathbb{R}^3)}\leq C.
\end{equation}
\end{corollary}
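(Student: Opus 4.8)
The plan is to interpolate between the energy bound and the fractional regularity estimate, using a Sobolev-type inequality in $\mathbb{R}^3$. From the energy inequality and Gronwall we have $\sqrt{\rho^\mu}{\bf u}^\mu \in L^\infty(0,T;L^2(\mathbb{R}^3))$ uniformly in $\mu$; in particular this gives a uniform bound in $L^2((0,T)\times\mathbb{R}^3)$. From Lemma \ref{lem-R1} we have a uniform bound on $D_x^\alpha(\sqrt{\rho^\mu}{\bf u}^\mu)$ in $L^2((0,T)\times\mathbb{R}^3)$ for any $\alpha\in(0,1+\tfrac\beta2)$, which means $\sqrt{\rho^\mu}{\bf u}^\mu \in L^2(0,T;\dot H^\alpha(\mathbb{R}^3))$ uniformly. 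The fractional Gagliardo--Nirenberg--Sobolev inequality in $\mathbb{R}^3$, namely $\dot H^\alpha(\mathbb{R}^3)\hookrightarrow L^{p_\alpha}(\mathbb{R}^3)$ with $\tfrac1{p_\alpha}=\tfrac12-\tfrac\alpha3$ (valid for $0<\alpha<\tfrac32$), then upgrades the spatial integrability.

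First I would fix some $\alpha\in(0,\min\{1+\tfrac\beta2,\tfrac32\})$ so that both Lemma \ref{lem-R1} and the embedding are available, and set $p_\alpha=\tfrac{6}{3-2\alpha}>2$. Then for a.e. $t$,
\[
\|\sqrt{\rho^\mu}{\bf u}^\mu(t)\|_{L^{p_\alpha}_x}\le C\,\|\sqrt{\rho^\mu}{\bf u}^\mu(t)\|_{\dot H^\alpha_x}.
\]
Next I would interpolate in space between $L^2$ and $L^{p_\alpha}$: for $q$ with $2<q<p_\alpha$ write $\tfrac1q=\tfrac{\theta}2+\tfrac{1-\theta}{p_\alpha}$ with $\theta\in(0,1)$, so that
\[
\|\sqrt{\rho^\mu}{\bf u}^\mu(t)\|_{L^q_x}\le \|\sqrt{\rho^\mu}{\bf u}^\mu(t)\|_{L^2_x}^{\theta}\,\|\sqrt{\rho^\mu}{\bf u}^\mu(t)\|_{L^{p_\alpha}_x}^{1-\theta}.
\]
Raising to the $q$th power and integrating in $t$, I would use $\|\sqrt{\rho^\mu}{\bf u}^\mu(t)\|_{L^2_x}\le M_T^{1/2}$ uniformly in $t$ and $\mu$ to pull out the $L^2$ factor, leaving
\[
\int_0^T\|\sqrt{\rho^\mu}{\bf u}^\mu(t)\|_{L^q_x}^q\,\mathrm{d}t
\le M_T^{\theta q/2}\,C^{(1-\theta)q}\int_0^T\|\sqrt{\rho^\mu}{\bf u}^\mu(t)\|_{\dot H^\alpha_x}^{(1-\theta)q}\,\mathrm{d}t.
\]
To close this with Lemma \ref{lem-R1}, which controls only the $L^2_t\dot H^\alpha_x$ norm, I need the time exponent to match, i.e. $(1-\theta)q=2$; solving the two relations for $\theta$ and $q$ gives a specific admissible choice $q=q(\beta)>2$ (for instance, one convenient value works out so that $\tfrac1q=\tfrac12-\tfrac{\alpha}{3}\cdot\tfrac{2}{q}$ can be solved, yielding $q=\tfrac{2(3+2\alpha)}{3}$), and then the right-hand side is bounded by a constant depending only on $\rho_0,{\bf m}_0,{\bf f},\beta,k_\ast,T$.

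The main thing to be careful about, rather than a genuine obstacle, is the bookkeeping of exponents: one must pick $\alpha$ (using the freedom $\alpha<1+\tfrac\beta2$, and also $\alpha<\tfrac32$ for the embedding) and then $q$ so that the interpolation weight $\theta$ lies in $(0,1)$ and simultaneously $(1-\theta)q=2$, so that only the $L^2_{t,x}$-in-disguise quantity $\int_0^T\|\cdot\|_{\dot H^\alpha}^2$ — exactly what Lemma \ref{lem-R1} bounds — appears on the right. Since $\beta>0$ is some fixed positive number, this yields $q=q(\beta)>2$ strictly, which is all that is claimed. If one instead wants to avoid the fractional Sobolev embedding, an alternative is to bound $\sqrt{\rho^\mu}{\bf u}^\mu$ in $L^\infty_{t,x}$-weighted Fourier norms directly via Assumption (RICKHw) and Hausdorff--Young, but the interpolation route above is cleaner and self-contained given the lemmas already proved.
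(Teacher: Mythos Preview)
Your proposal is correct and follows essentially the same route as the paper: apply the (fractional) Gagliardo--Nirenberg inequality in $x$ with the interpolation relation $\tfrac{1}{q}=\tfrac{\theta}{2}+(1-\theta)\bigl(\tfrac{1}{2}-\tfrac{\alpha}{3}\bigr)$, pull out the $L^\infty_tL^2_x$ energy bound, and choose $(1-\theta)q=2$ so that only the $L^2_tH^\alpha_x$ quantity from Lemma~\ref{lem-R1} remains, yielding $q=2+\tfrac{4\alpha}{3}=\tfrac{2(3+2\alpha)}{3}$. The paper states this in one line via Gagliardo--Nirenberg with $H^\alpha$ in place of your $\dot H^\alpha$; your extra remark that one should take $\alpha<\tfrac32$ for the embedding is a helpful detail the paper leaves implicit.
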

\begin{proof} In view of Gagliardo-Nirenberg interpolation inequality and Lemma \ref{lem-R1}, we have
\begin{equation}
	\|\sqrt{\rho^\mu}{\bf u}^\mu\|_{L^q}
\leq \|\sqrt{\rho^\mu}{\bf u}^\mu\|_{L^2}^{\alpha_1}
\| \sqrt{\rho^\mu} {\bf u}^\mu\|_{H^ \alpha}^{1-\alpha_1},
\end{equation}
where $ \frac{1}{q}=\frac{\alpha_1}{2}+ \big( \frac{1}{2}-\frac{\alpha}{3} \big)(1- \alpha_1)$ for $ 0< \alpha_1<1 .$
Note that  $\sqrt{\rho^\mu}{\bf u}^\mu $is uniformly bounded in $ L^2(0,T;H^\alpha(\mathbb{R}^3)), $ we get $ q=2+\frac{4 \alpha}{3}>2. $
\end{proof}

Now it turns to show that $\sqrt{\rho^{\mu}}\bf u^{\mu}$ is equicontinuous with respect to the space variable $x$, which is necessary for our argument to get its convergence.
\begin{lemma}\label{lem-R3}Under {\sl Assumption (RICKHw)}, for any $ T \in (0, +\infty), $
  $ \sqrt{\rho^\mu} {\bf u}^\mu $ is equicontinuous with respect to the space variable ${\bf x}$ in $ L^2((0,T)\times \mathbb{R}^3) $, independent of $ \mu, $ i.e.,
 ~\begin{equation}
  \int _0^T\int _{\mathbb{R}^3} | \sqrt{\rho^\mu}{\bf u}^\mu(t,{\bf x}+ \Delta {\bf x})-\sqrt{\rho^\mu} {\bf u}^\mu(t,{\bf x})|^2\, \mathrm{d}{\bf x}\, \mathrm{d}t
  \rightarrow 0,~~as ~ \Delta {\bf x} \rightarrow 0.
 \end{equation}
 \end{lemma}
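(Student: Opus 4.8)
The plan is to use the Fourier-side characterization of the fractional Sobolev norm together with the uniform bound from Lemma \ref{lem-R1}. First I would pass to Fourier variables: by the Parseval identity,
\[
\int_0^T\int_{\mathbb{R}^3} |\sqrt{\rho^\mu}{\bf u}^\mu(t,{\bf x}+\Delta{\bf x})-\sqrt{\rho^\mu}{\bf u}^\mu(t,{\bf x})|^2\,\mathrm{d}{\bf x}\,\mathrm{d}t
= \int_0^T\int_{\mathbb{R}^3} |e^{i{\bf k}\cdot\Delta{\bf x}}-1|^2\,|\widehat{\sqrt{\rho^\mu}{\bf u}^\mu}(t,{\bf k})|^2\,\mathrm{d}{\bf k}\,\mathrm{d}t .
\]
Then I would split the ${\bf k}$-integral into the low-frequency region $\{|{\bf k}|\le R\}$ and the high-frequency region $\{|{\bf k}|>R\}$ for a parameter $R$ to be chosen, and control each piece uniformly in $\mu$.

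On the low-frequency region I would use the elementary bound $|e^{i{\bf k}\cdot\Delta{\bf x}}-1|\le |{\bf k}||\Delta{\bf x}|\le R|\Delta{\bf x}|$, so that this contribution is at most $R^2|\Delta{\bf x}|^2\int_0^T\|\widehat{\sqrt{\rho^\mu}{\bf u}^\mu}\|_{L^2_{\bf k}}^2\,\mathrm{d}t = R^2|\Delta{\bf x}|^2\int_0^T\|\sqrt{\rho^\mu}{\bf u}^\mu\|_{L^2_{\bf x}}^2\,\mathrm{d}t$, which by the energy bound \eqref{Energy} is $\le C R^2|\Delta{\bf x}|^2$, uniformly in $\mu$. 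On the high-frequency region I would use $|e^{i{\bf k}\cdot\Delta{\bf x}}-1|\le 2$ together with the trick $1 = |{\bf k}|^{2\alpha}/|{\bf k}|^{2\alpha}\le |{\bf k}|^{2\alpha}/R^{2\alpha}$ for some fixed $\alpha\in(0,1+\tfrac{\beta}{2})$, so that this contribution is at most $\frac{4}{R^{2\alpha}}\int_0^T\int_{\mathbb{R}^3}|{\bf k}|^{2\alpha}|\widehat{\sqrt{\rho^\mu}{\bf u}^\mu}|^2\,\mathrm{d}{\bf k}\,\mathrm{d}t \le \frac{C}{R^{2\alpha}}$ by Lemma \ref{lem-R1} (which is exactly the $H^\alpha$ bound in Fourier form), again uniformly in $\mu$.

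Combining the two pieces gives a uniform bound of the form $C R^2 |\Delta{\bf x}|^2 + C R^{-2\alpha}$. Given $\varepsilon>0$, I would first choose $R$ large so that the second term is below $\varepsilon/2$, and then choose $|\Delta{\bf x}|$ small enough (depending on that $R$ but not on $\mu$) so the first term is below $\varepsilon/2$; letting $\Delta{\bf x}\to 0$ then yields the claim. I do not expect any serious obstacle here: the only point requiring a little care is making sure the choice of $R$ comes before the choice of $|\Delta{\bf x}|$ so that the smallness is genuinely uniform in $\mu$, and noting that $\alpha>0$ is all that is needed for the high-frequency term to decay in $R$. The role of Assumption (RICKHw) is entirely absorbed into Lemma \ref{lem-R1}, so no further spectral input is required at this step.
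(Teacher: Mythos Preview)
Your proposal is correct and follows essentially the same route as the paper: pass to Fourier variables via Parseval and control the multiplier $|e^{i{\bf k}\cdot\Delta{\bf x}}-1|$ using the uniform $H^\alpha$ bound from Lemma~\ref{lem-R1}. The only cosmetic difference is that the paper applies the single inequality $|e^{i{\bf k}\cdot\Delta{\bf x}}-1|\le C|{\bf k}|^{\alpha}|\Delta{\bf x}|^{\alpha}$ (valid for $0<\alpha\le 1$, being the interpolation of your two bounds $|e^{i\theta}-1|\le|\theta|$ and $|e^{i\theta}-1|\le 2$) to obtain directly the quantitative rate $C|\Delta{\bf x}|^{2\alpha}$, whereas you carry out the low/high frequency split and an $\varepsilon/2$ argument; optimizing your bound over $R$ recovers the same rate.
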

 \begin{proof}Using Lemma \ref{lem-R1} and Parseval identity, we obtain
 \begin{align*}
   &\int _0^T\int _{\mathbb{R}^3} | \sqrt{\rho^\mu}{\bf u}^\mu(t,{\bf x}+ \Delta {\bf x})-\sqrt{\rho^\mu} {\bf u}^\mu(t,{\bf x})|^2\, \mathrm{d}{\bf x}\, \mathrm{d}t \nonumber \\ 
   =& \int_0^T \int_{\mathbb{R}^3} \Big| \int_{\mathbb{R}^3}\widehat{\sqrt{\rho^\mu}{\bf u}^\mu}(t,{\bf k})e^{i{\bf k} \cdot {\bf x}}(e^{i{\bf k} \cdot \Delta {\bf x}}-1)\,\mathrm{d}{\bf k}\Big|^2\, \mathrm{d}{\bf x}\,\mathrm{d}{t} \nonumber \\
   \leq& C\int_0^T \int_{\mathbb{R}^3} \int_{\mathbb{R}^3} e^{2i{\bf k} \cdot {\bf x}}\, \mathrm{d}{\bf k}
    \int_{\mathbb{R}^3} |\widehat{\sqrt{\rho^\mu}{\bf u}^\mu}(t,{\bf k})|^2(e^{i{\bf k} \cdot \Delta {\bf x}}-1)^2\, \mathrm{d}{\bf k}\, \mathrm{d}{\bf x}\,\mathrm{d}{t} \nonumber \\
    \leq& C |\Delta {\bf x}|^{2 \alpha}\int_{\mathbb{R}^3} \delta(2{\bf x})\, \mathrm{d}{\bf x}
    \int_0^T \int _{\mathbb{R}^3} |{\bf k}|^{2 \alpha}
    |\widehat{ \sqrt{\rho^\mu}{\bf u}^\mu}(t,{\bf k})|^2 \, \mathrm{d}{\bf k}\, \mathrm{d}t 
    \nonumber \\
   \leq& C |\Delta {\bf x}|^{2 \alpha} \int _0^T\int _{\mathbb{R}^3} |D_{\bf x}^\alpha(\sqrt{\rho^\mu}{\bf u}^\mu)|^2\, \mathrm{d}{\bf x}\, \mathrm{d}t  \nonumber \\
    \leq& C|\Delta {\bf x}|^{2 \alpha} ,
 \end{align*}
 where $ \delta(\bf x) $ is the Dirac delta function.
 \end{proof}

 To show the strong convergence of $\sqrt{\rho^{\mu}}{\bf u}^{\mu}$ in $L^2$ space, it is crucial to have the following lemma. 
\begin{lemma}\label{lem-R4}Under {\sl Assumption (RICKHw)}, for any $ T >0 $,
  $ \sqrt{\rho^\mu} {\bf u}^\mu $ is equicontinuous with respect to the time variable t in $ L^2((0,T- \Delta t )\times \mathbb{R}^3) $, independent of $ \mu,$ i.e.
 ~\begin{equation}
  \int _0^{T-\Delta t}\int _{\mathbb{R}^3}
  | \sqrt{\rho^\mu}{\bf u}^\mu(t+ \Delta t,{\bf x})
  -\sqrt{\rho^\mu} {\bf u}^\mu(t,{\bf x})|^2\, \mathrm{d}{\bf x}\, \mathrm{d}t
  \rightarrow 0,~~as ~ \Delta t \rightarrow 0.
 \end{equation}
 \end{lemma}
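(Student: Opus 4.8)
The aim is a modulus of continuity $\omega(\Delta t)\to0$, \emph{independent of $\mu$}, for the map $t\mapsto\sqrt{\rho^\mu}{\bf u}^\mu(t,\cdot)$ measured in $L^2((0,T-\Delta t)\times\mathbb{R}^3)$; together with the spatial equicontinuity of Lemma \ref{lem-R3}, the uniform $L^q_{t,x}$ bound of Corollary \ref{cor-2}, and the energy bound \eqref{Energy}, this is precisely what will allow one (when Theorem \ref{thm-R1} is proved) to invoke a standard compactness criterion of Fr\'echet--Kolmogorov type and extract a subsequence along which $\sqrt{\rho^\mu}{\bf u}^\mu$ converges strongly in $L^2$, i.e.\ \eqref{conv}. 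The structural difficulty is the one stressed in the introduction: neither $\partial_t(\rho^\mu{\bf u}^\mu)$ nor $\partial_t(\sqrt{\rho^\mu}{\bf u}^\mu)$ can be estimated in any reasonable space, the first because it involves $\nabla p^\mu$ (there is no pressure law for \eqref{maineq}) and the second because it in addition divides by $\rho^\mu$, which may be arbitrarily small. Both obstructions will be circumvented, as in \cite{CY,Yu}, by testing the momentum equation only against \emph{divergence-free} functions, which annihilate the pressure, together with a spatial mollification of the weighted velocity that keeps the estimates away from $1/\sqrt{\rho^\mu}$.

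\textbf{Step 1: weak, solenoidal, uniform-in-$\mu$ time regularity of the momentum.} Fix $0<t<t+\Delta t<T$ and a divergence-free ${\bf g}\in C_0^\infty(\mathbb{R}^3;\mathbb{R}^3)$. Insert into \eqref{momentum for NS} the admissible test function $\boldsymbol\phi(s,{\bf x})=\psi_h(s){\bf g}({\bf x})$, with $\psi_h\in C_0^\infty(\mathbb{R}_+)$ increasing to $\chi_{[t,t+\Delta t]}$; since $\mathrm{div}_{\bf x}\boldsymbol\phi=\psi_h\,\mathrm{div}\,{\bf g}=0$, the pressure plays no role. Letting $h\to0$ and using the continuity of $s\mapsto\int\rho^\mu{\bf u}^\mu(s)\cdot{\bf g}$, one obtains
\begin{align*}
\Big|\int_{\mathbb{R}^3}\big(\rho^\mu{\bf u}^\mu(t+\Delta t)-\rho^\mu{\bf u}^\mu(t)\big)\cdot{\bf g}\,\mathrm{d}{\bf x}\Big|
&\le\int_t^{t+\Delta t}\!\int_{\mathbb{R}^3}\big(|\sqrt{\rho^\mu}{\bf u}^\mu|^2\,|\nabla{\bf g}|+\mu|\nabla{\bf u}^\mu|\,|\nabla{\bf g}|+\rho^\mu|{\bf f}|\,|{\bf g}|\big)\,\mathrm{d}{\bf x}\,\mathrm{d}s .
\end{align*}
On the right $|\sqrt{\rho^\mu}{\bf u}^\mu|^2$ is bounded in $L^\infty(0,T;L^1)$ by \eqref{Energy} (and in $L^{q/2}_{t,x}$ with $q/2>1$ by Corollary \ref{cor-2}); the viscous term is $\le(\Delta t)^{1/2}(\mu M_T)^{1/2}\|\nabla{\bf g}\|_{L^2}$ by \eqref{Energy}; and $\rho^\mu{\bf f}$ is bounded in $L^2_{t,x}$ since $\|\rho^\mu\|_{L^\infty}\le\|\rho_0\|_{L^\infty}$. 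Hence, for $\mu\le1$ and $\Delta t\le1$,
\[
\Big|\int_{\mathbb{R}^3}\big(\rho^\mu{\bf u}^\mu(t+\Delta t)-\rho^\mu{\bf u}^\mu(t)\big)\cdot{\bf g}\,\mathrm{d}{\bf x}\Big|\le C\big(\|{\bf g}\|_{W^{1,\infty}}+\|{\bf g}\|_{H^1}\big)(\Delta t)^{1/2},
\]
with $C$ independent of $\mu$: the solenoidal part of $\rho^\mu{\bf u}^\mu$ is equicontinuous in time with values in a fixed negative-order Sobolev space, uniformly in $\mu$.

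\textbf{Step 2 (the delicate step).} It remains to convert this weak, solenoidal time regularity into $L^2_{t,x}$ time-equicontinuity of $\sqrt{\rho^\mu}{\bf u}^\mu$ itself. Three further uniform inputs are used: (i) from the continuity equation $\partial_t\rho^\mu=-\mathrm{div}\big(\sqrt{\rho^\mu}\,(\sqrt{\rho^\mu}{\bf u}^\mu)\big)$ with $\sqrt{\rho^\mu}{\bf u}^\mu$ bounded in $L^\infty(0,T;L^2)$, so $\rho^\mu$ is uniformly Lipschitz in time with values in $H^{-1}$, while the hypothesis $(1/\sqrt{\rho_0})\,1_{\{\rho_0<\delta_0\}}\in L^2$ is propagated by the (measure-preserving) transport flow; (ii) by Lemma \ref{lem-R1} and Corollary \ref{cor-2}, $\sqrt{\rho^\mu}{\bf u}^\mu$ is bounded in $L^2(0,T;H^\alpha)\cap L^\infty(0,T;L^2)\cap L^q_{t,x}$; (iii) by Lemma \ref{lem-R3}, $\|\sqrt{\rho^\mu}{\bf u}^\mu(\cdot,\cdot+\Delta{\bf x})-\sqrt{\rho^\mu}{\bf u}^\mu\|_{L^2_{t,x}}\le C|\Delta{\bf x}|^{\alpha}$. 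Mollifying $\sqrt{\rho^\mu}{\bf u}^\mu$ in space at scale $\delta$ then produces a field whose $L^2_{t,x}$-distance to $\sqrt{\rho^\mu}{\bf u}^\mu$ is $\le C\delta^{\alpha}$ uniformly in $\mu$ (by (iii)), and whose time increments are bounded by $C(\delta)(\Delta t)^{1/2}$, the $\delta$-dependence coming from the mollifier derivatives and the uniform decay in $\Delta t$ from Step 1 together with (i); choosing $\delta=\delta(\Delta t)\to0$ slowly enough balances the two and gives the desired uniform $\omega(\Delta t)$. I expect this last step to be the main obstacle: one must transfer the time regularity of the \emph{solenoidal part} of $\rho^\mu{\bf u}^\mu$ onto $\sqrt{\rho^\mu}{\bf u}^\mu$ while never using ${\bf u}^\mu$ directly (it is not uniformly bounded in $L^2$, only $\sqrt{\rho^\mu}{\bf u}^\mu$ is) and without any control on $\nabla p^\mu$; the argument works only because the dangerous factors $\nabla p^\mu$ and $1/\sqrt{\rho^\mu}$ occur exclusively paired with quantities that are either annihilated (the pressure, against divergence-free test fields) or uniformly bounded (the weighted velocity), so that the regularisation costs merely controllable powers of $\delta$ and $\Delta t$ — making that bookkeeping precise is the technical core of the lemma.
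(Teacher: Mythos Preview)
Your Step 1 is correct and is indeed the engine behind the paper's proof as well: testing the momentum equation against a divergence-free field kills the pressure and yields a uniform-in-$\mu$ modulus of continuity for $t\mapsto\rho^\mu{\bf u}^\mu(t)$ in a weak topology.

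The gap is in Step 2, and it is exactly the one you flag at the end. Spatially mollifying $\sqrt{\rho^\mu}{\bf u}^\mu$ does \emph{not} produce a divergence-free field, so the time regularity from Step 1 (which lives on the solenoidal part of $\rho^\mu{\bf u}^\mu$) cannot be applied to its time increments; and there is no evident commutator argument that transfers the estimate from $\rho^\mu{\bf u}^\mu$ to $\sqrt{\rho^\mu}{\bf u}^\mu$ without dividing by $\sqrt{\rho^\mu}$ somewhere. Your stated intention to ``never use ${\bf u}^\mu$ directly'' is precisely what blocks the argument.

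The paper resolves this by doing the opposite of what you propose: it \emph{does} use ${\bf u}^\mu$ directly. The key device is the algebraic identity
\[
|\sqrt{\rho}{\bf u}(t+\Delta t)-\sqrt{\rho}{\bf u}(t)|^2
=\big[\rho{\bf u}(t+\Delta t)-\rho{\bf u}(t)\big]\cdot\big[{\bf u}(t+\Delta t)-{\bf u}(t)\big]
+\text{(two cross terms carrying }\sqrt{\rho}(t+\Delta t)-\sqrt{\rho}(t)\text{)},
\]
which pairs the $\rho{\bf u}$-increment against the ${\bf u}$-increment. One then replaces ${\bf u}(t+\Delta t)-{\bf u}(t)$ by $\varphi(t)\big({\bf u}^\epsilon(t+\Delta t)-{\bf u}^\epsilon(t)\big)$, where ${\bf u}^\epsilon$ is a \emph{space-time} mollification of ${\bf u}$ and $\varphi\in\mathcal D(0,\infty)$; since $\mathrm{div}\,{\bf u}=0$, this test function is divergence-free and Step 1 applies, yielding a bound $C\Delta t/\epsilon^{(3q+4)/q}$. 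The cross terms are handled via $\sqrt{\rho}\in C([0,T];L^p)$. The price is that one needs integrability of ${\bf u}$ itself, not merely of $\sqrt{\rho}{\bf u}$: this is obtained by the splitting ${\bf u}={\bf u}\,1_{\{\rho<\delta_0\}}+{\bf u}\,1_{\{\rho\ge\delta_0\}}$, with the first piece in $L^l_{t,x}$ (here the hypothesis $(1/\sqrt{\rho_0})1_{\{\rho_0<\delta_0\}}\in L^2$ is essential) and the second in $L^\infty_tL^2_x$. All of this also forces $q>4$ in Corollary~\ref{cor-2}, not just $q>2$. Finally one optimises $\epsilon=\epsilon(\Delta t)$.

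In short: your diagnosis of the difficulty is right, but the missing idea is the algebraic decomposition above together with the willingness to mollify ${\bf u}$ (not $\sqrt{\rho}{\bf u}$) and the density-level-set splitting that makes ${\bf u}$ integrable enough for the error terms.
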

 \begin{proof}
For simplicity, we drop the superscript $ \mu $ of
 $ \sqrt{\rho^\mu}{\bf u}^\mu $ in the following proof. For any $ \varphi(t) \in \mathcal{D}(0,+ \infty), $ we have
 \begin{align}\label{rt}
 &\int _0^{T- \Delta t}\int _{\mathbb{R}^3}
  | \sqrt{\rho}{\bf u}(t+ \Delta t,{\bf x})
  -\sqrt{\rho} {\bf u}(t,{\bf x})|^2\, \mathrm{d}{\bf x}\, \mathrm{d}t \nonumber \\
   =&\int _0^{T- \Delta t}\int _{\mathbb{R}^3} [\rho {\bf u}(t+\Delta t,{\bf x})
  -\rho {\bf u}(t,{\bf x})][{\bf u}(t+\Delta t,{\bf x})-{\bf u}(t,{\bf x})]\, \mathrm{d}{\bf x}\, \mathrm{d}t \nonumber \\
   &+\int _0^{T- \Delta t}\int _{\mathbb{R}^3}
  \sqrt{\rho} {\bf u}(t+\Delta t,{\bf x}){\bf u}(t,{\bf x})
  [ \sqrt{\rho}(t+\Delta t,{\bf x})-\sqrt{\rho}(t,{\bf x})]\, \mathrm{d}{\bf x}\, \mathrm{d}t \nonumber \\
   &+\int _0^{T- \Delta t}\int _{\mathbb{R}^3}
  \sqrt{\rho }{\bf u}(t,{\bf x}){\bf u}(t+\Delta t,{\bf x})
  [ \sqrt{\rho}(t,{\bf x})-\sqrt{\rho}(t+\Delta t,{\bf x})]\, \mathrm{d}{\bf x}\, \mathrm{d}t \nonumber \\
   =& \int _0^{T- \Delta t}\int _{\mathbb{R}^3} [\rho {\bf u}(t+\Delta t,{\bf x})
  -\rho {\bf u}(t,{\bf x})]\left\{ [{\bf u}(t+\Delta t,{\bf x})
  - \varphi(t){\bf u}^\epsilon(t+\Delta t,{\bf x})]
  -[{\bf u}(t,{\bf x})-\varphi(t){\bf u}^ \epsilon(t,{\bf x})]\right\}\, \mathrm{d}{\bf x}\, \mathrm{d}t \nonumber \\
  &+\int _0^{T- \Delta t}\int _{\mathbb{R}^3} [\rho {\bf u}(t+\Delta t,{\bf x})
  -\rho {\bf u}(t,{\bf x})] [\varphi(t){\bf u}^\epsilon(t+
  \Delta t,{\bf x})-\varphi(t){\bf u}^ \epsilon(t,{\bf x})]\, \mathrm{d}{\bf x}\, \mathrm{d}t \nonumber \\
   &+\int _0^{T- \Delta t}\int _{\mathbb{R}^3}
  \sqrt{\rho} {\bf u}(t+\Delta t,{\bf x}){\bf u}(t,{\bf x})
  [ \sqrt{\rho}(t+\Delta t,{\bf x})-\sqrt{\rho}(t,{\bf x})]\, \mathrm{d}{\bf x}\, \mathrm{d}t \nonumber \\
   &+\int _0^{T- \Delta t}\int _{\mathbb{R}^3}
  \sqrt{\rho} {\bf u}(t,{\bf x}){\bf u}(t+\Delta t,{\bf x})
  [ \sqrt{\rho}(t,{\bf x})-\sqrt{\rho}(t+\Delta t,{\bf x})]\, \mathrm{d}{\bf x}\, \mathrm{d}t \nonumber \\
  =&I_1+I_2+I_3+I_4,
 \end{align}
 where
 \begin{equation*}
  {\bf u}^\epsilon(t,{\bf x})=(j_{\epsilon} \ast {\bf u})(t,{\bf x})
  = \int_{|t-s|\leq \epsilon}\int_{|{\bf x}-{\bf y}|\leq \epsilon} j_{\epsilon}(t-s,{\bf x}-{\bf y}) {\bf u}(s,{\bf y})\, \mathrm{d}{\bf y}\, \mathrm{d}s,
  \end{equation*}
   and $j_\epsilon$ is a standard mollifier. Note that $\varphi(t){\bf u^\epsilon}(t+\Delta t,{\bf x}) $, $ \varphi(t){\bf u^\epsilon}(t,{\bf x}) $ are well defined in $ \mathcal{D}(0,+\infty). $

We divide the equality \eqref{rt} into four terms, and each term will be considered separately.
First, we can rewrite $ I_3 $ as following
\begin{align}\label{I3}
 I_3=& \int _0^{T- \Delta t}\int _{\mathbb{R}^3}
  \sqrt{\rho} {\bf u}(t+\Delta t,{\bf x}){\bf u}(t,{\bf x})
  [ (\sqrt{\rho}(t+\Delta t,{\bf x})-\sqrt{\rho^\epsilon}(t+\Delta t,{\bf x}))-(\sqrt{\rho}(t,{\bf x})-\sqrt{\rho^\epsilon}(t,{\bf x}))]\, \mathrm{d}{\bf x}\, \mathrm{d}t \nonumber \\
  &+\int _0^{T- \Delta t}\int _{\mathbb{R}^3}
  \sqrt{\rho} {\bf u}(t+\Delta t,{\bf x}){\bf u}(t,{\bf x})
  [ \sqrt{\rho^\epsilon}(t+\Delta t,{\bf x})-\sqrt{\rho^\epsilon}(t,{\bf x})]\, \mathrm{d}{\bf x}\, \mathrm{d}t \nonumber \\
    =& \int _0^{T- \Delta t}\int _{\mathbb{R}^3}\sqrt{\rho} ({\bf u}(t+\Delta t,{\bf x}){\bf u}(t,{\bf x})
  (\sqrt{\rho}(t+\Delta t,{\bf x})-\sqrt{\rho^\epsilon}(t+\Delta t,{\bf x}))\, \mathrm{d}{\bf x}\, \mathrm{d}t \nonumber \\
  &+\int _0^{T- \Delta t}\int _{\mathbb{R}^3}\sqrt{\rho} {\bf u}(t+\Delta t,{\bf x}){\bf u}(t,{\bf x})(\sqrt{\rho^\epsilon}(t,{\bf x})-\sqrt{\rho}(t,{\bf x}))\, \mathrm{d}{\bf x}\, \mathrm{d}t \nonumber \\
  &+\int _0^{T- \Delta t}\int _{\mathbb{R}^3}\sqrt{\rho} {\bf u}(t+\Delta t,{\bf x})
  {\bf u}(t,{\bf x}) ( \sqrt{\rho^\epsilon}(t+\Delta t,{\bf x})-\sqrt{\rho^\epsilon}(t,{\bf x}))\, \mathrm{d}{\bf x}\, \mathrm{d}t \nonumber \\ 
   =&I^1_3+I^2_3+I^3_3. 
\end{align}
Here we decompose $ {\bf u}$ into $ {\bf u}= {\bf u} 1_{ \left\{ \rho<\delta_0 \right\}}
+{\bf u} 1_{ \left\{ \rho \geq\delta_0 \right\}}={\bf u}_1+{\bf u}_2$, then in view of $\frac{1}{\sqrt{\rho_0}}1_{\left\{ \rho_0< \delta_0 \right\}} \in L^2(\mathbb{R}^3),$ we can get
\begin{equation}
\|{\bf u} 1 _{\left\{ \rho <\delta_0 \right\}}\|_{L^l}
\leq \big\|\frac{1}{\sqrt{\rho}}1_{\left\{ \rho <\delta_0 \right\}}
\big\|_{L^2}\|\sqrt{\rho}{\bf u}\|_{L^q},~~l=\frac{2q}{q+2}.
\end{equation}
Thus ${\bf u}_1 \in L^{l}((0,T)\times\mathbb{R}^3)$.
As for ${\bf u}_2$, since $\sqrt{\rho}{\bf u} \in L^\infty(0,T;L^2(\mathbb{R}^3))$, we have 
${\bf u}_2 \in L^\infty(0,T;L^2(\mathbb{R}^3)).$
Moreover, it holds that as $ \epsilon \rightarrow 0,$
\begin{equation}\label{u-conv}  
\|{\bf u}_1-{\bf u}_1^\epsilon\|_{L^{l}(0,T;L^{l}(\mathbb{R}^3))} \rightarrow 0,
~~\|{\bf u}_2-{\bf u}_2^\epsilon\|_{L^\infty(0,T;L^2(\mathbb{R}^3))} \rightarrow 0. 
\end{equation}
For $ I_3^1+I_3^2, $ using 
$\sqrt{\rho^\epsilon}\rightarrow \sqrt{\rho} \,~
\mathrm{in}\, C([0,T], L^p(\mathbb{R}^3)),~2 \leq p<\infty$, as $\epsilon=\epsilon(\Delta t) \rightarrow 0 $, we have
\begin{align}
 I_3^1+I_3^2 \leq& \int _0^{T- \Delta t}\int _{\mathbb{R}^3} \sqrt{\rho} {\bf u}~
  {\bf u}_1(\sqrt{\rho^\epsilon}-\sqrt{\rho})
  \, \mathrm{d}{\bf x}\, \mathrm{d}t \nonumber \\ 
  &+\int _0^{T- \Delta t}\int _{\mathbb{R}^3} \sqrt{\rho} {\bf u}~
  {\bf u}_2(\sqrt{\rho^\epsilon}-\sqrt{\rho})
  \, \mathrm{d}{\bf x}\, \mathrm{d}t\nonumber \\  
 \leq& C\|\sqrt{\rho}{\bf u}\|_{L^q(0,T;L^q(\mathbb{R}^3))} \|{\bf u}_1\|_{L^l(0,T;L^l(\mathbb{R}^3))}\|\sqrt{\rho^\epsilon}-\sqrt{\rho}\|_{L^\infty(0,T;L^{p_1}(\mathbb{R}^3))} \nonumber \\[1mm]
  &+C\|\sqrt{\rho}{\bf u}\|_{L^q(0,T;L^q(\mathbb{R}^3))} \|{\bf u}_2\|_{L^\infty(0,T;L^2(\mathbb{R}^3))}\|\sqrt{\rho^\epsilon}-\sqrt{\rho}\|_{L^\infty(0,T;L^{p_2}(\mathbb{R}^3))} \nonumber \\ 
  & \rightarrow 0,~~as~\Delta t \rightarrow 0,
\end{align}
where 
$ q>4 $, $ p_1 $ and $ p_2 $ satisfy $ \frac{1}{q}+\frac{1}{l}+\frac{1}{p_1}=1 $ and $\frac{1}{q}+\frac{1}{2}+\frac{1}{p_2}=1 $ respectively.\\
For $ I_3^3, $ using H\"{o}lder inequality, we obtain
\begin{align}
 I_3^3 \leq& C\|\sqrt{\rho}{\bf u}\|_{L^{q}(0,T;L^{q}(\mathbb{R}^3))}
 \|{\bf u}_1\|_{L^{l}(0,T;L^{l}(\mathbb{R}^3)} \|\sqrt{\rho^\epsilon}(t+\Delta t,{\bf x})-\sqrt{\rho^\epsilon}(t,{\bf x})\|_{L^\frac{2q}{q-4}([0,T] \times \mathbb{R}^3)}  \nonumber \\ 
  &+C\|\sqrt{\rho}{\bf u}\|_{L^\infty(0,T;L^2(\mathbb{R}^3))}
  \|{\bf u}_2\|_{L^\infty(0,T;L^2(\mathbb{R}^3)}) \|\sqrt{\rho^\epsilon}(t+\Delta t,{\bf x})-\sqrt{\rho^\epsilon}(t,{\bf x})\|_{L^\infty([0,T] \times \mathbb{R}^3)} \nonumber \\ 
   &\rightarrow 0,~~as~\Delta t \rightarrow 0,
\end{align}
where $ q>4. $

Similarly, we have as $\Delta t \rightarrow0, I_4 \rightarrow0, $ independent of $ \mu. $

For $ I_2, $ from $ \mathrm{div}~{\bf u}^\epsilon=0 $ and Definition \ref{defi-1} , we get
\begin{align}\label{i2}
 I_2=&\int _0^{T- \Delta t}\int _{\mathbb{R}^3}\int_t^{t+\Delta t}
 (\rho {\bf u})_s(s,{\bf x})\cdot [\varphi(t){\bf u}^\epsilon(t+\Delta t,{\bf x})-\varphi(t){\bf u}^ \epsilon(t,{\bf x})]\, \mathrm{d}s\, \mathrm{d}{\bf x}\,\mathrm{d}t \nonumber \\
=& \int _0^{T- \Delta t}\int _{\mathbb{R}^3}\int_t^{t+\Delta t}
(\rho {\bf u} \otimes {\bf u})(s,{\bf x}): \nabla [\varphi(t){\bf u}^\epsilon(t+\Delta t,{\bf x})-\varphi(t){\bf u}^ \epsilon(t,{\bf x})]\,
\mathrm{d}s\, \mathrm{d}{\bf x}\, \mathrm{d}t \nonumber \\
&-\int _0^{T- \Delta t}\int _{\mathbb{R}^3}\int_t^{t+\Delta t}
\mu \nabla {\bf u}(s,{\bf x}) \cdot \nabla[\varphi(t){\bf u}^\epsilon(t+\Delta t,{\bf x})-\varphi(t){\bf u}^ \epsilon(t,{\bf x})]\,
\mathrm{d}s\, \mathrm{d}{\bf x}\, \mathrm{d}t \nonumber \\
&+\int _0^{T- \Delta t}\int _{\mathbb{R}^3}\int_t^{t+\Delta t}
 (\rho \mathbf{f})(s,{\bf x})\cdot [\varphi(t){\bf u}^\epsilon(t+\Delta t,{\bf x})-\varphi(t){\bf u}^ \epsilon(t,{\bf x})]\, \mathrm{d}s\, \mathrm{d}{\bf x}\,
\mathrm{d}t \nonumber \\
 =& I_2^1+I_2^2+I_2^3.
\end{align}
Notice that, for any $x\in\R^3$,
\begin{align}\label{1}
 | \varphi(t){\bf u}^\epsilon(t,{\bf x}) |
 \leq&\frac{C}{\epsilon^4} \Big|\int_{|t-s|\leq \epsilon}\int_{|{\bf x}-{\bf y}|\leq \epsilon}
 j(\frac{t-s}{\epsilon},\frac{{\bf x}-{\bf y}}{\epsilon}){\bf u}(s,{\bf y})\, \mathrm{d}{\bf y}\, \mathrm{d}s\Big| \nonumber \\
  \leq& \frac{C}{\epsilon^{4}}\|{\bf u}_1\|_{L^{l}(0,T;L^{l}(\mathbb{R}^3)}\Big( \int_{|\tau|\leq 1}\int_{|{\bf z}|\leq 1}
 |j(\tau,{\bf z})|^{l'}\epsilon^4\, \mathrm{d}{\bf z}\, \mathrm{d}\tau \Big)^{1/{l'}}  \nonumber \\
 &+\frac{C}{\epsilon^4} \|{\bf u}_2\|_{L^\infty(0,T;L^2(\mathbb{R}^3)}
  \Big( \int_{|\tau|\leq 1}\int_{|{\bf z}|\leq 1}
 |j(\tau,{\bf z})|^{2}\epsilon^4\, \mathrm{d}{\bf z}\, \mathrm{d}\tau \Big)^{1/2} \nonumber \\ 
 \leq& \frac{C}{\epsilon^{2(q+2)/q}},
\end{align}
and
\begin{align}\label{2}
| \varphi(t) \nabla{\bf u}^\epsilon(t,{\bf x}) |
 \leq& \frac{C}{\epsilon^5} \Big|\int_{|t-s|\leq \epsilon}\int_{|{\bf x}-{\bf y}|\leq \epsilon}
 \partial_2 j(\frac{t-s}{\epsilon},\frac{{\bf x}-{\bf y}}{\epsilon}) {\bf u}(s,{\bf y})\, \mathrm{d}{\bf y}\, \mathrm{d}s\Big| \nonumber \\
\leq& \frac{C}{\epsilon^{5}}\|{\bf u}_1\|_{L^{l}(0,T;L^{l}(\mathbb{R}^3)}\Big( \int_{|\tau|\leq 1}\int_{|{\bf z}|\leq 1}
 |\partial_z j(\tau,{\bf z})|^{l'}\epsilon^4\, \mathrm{d}{\bf z}\, \mathrm{d}\tau \Big)^{1/{l'}}  \nonumber \\
 &+\frac{C}{\epsilon^5} \|{\bf u}_2\|_{L^\infty(0,T;L^2(\mathbb{R}^3)}
  \Big( \int_{|\tau|\leq 1}\int_{|{\bf z}|\leq 1}
 |\partial_z j(\tau,{\bf z})|^{2}\epsilon^4\, \mathrm{d}{\bf z}\, \mathrm{d}\tau \Big)^{1/2} \nonumber \\ 
 \leq& \frac{C}{\epsilon^{(3q+4)/q}}.
\end{align}
For $ I_2^1+I_2^3, $ by \eqref{1},\eqref{2} and H\"{o}lder inequality, we have
\begin{align}\label{213}
 I_2^1+I_2^3 \leq&C \Delta t~\|\sqrt{\rho} {\bf u}\|_{L^2}^2|\varphi \nabla{\bf u}^\epsilon| +C \Delta t~\|\rho\|_{L^2}\|\mathbf{f}\|_{L^2}|\varphi{\bf u}^\epsilon | \nonumber \\
  \leq& \frac{C \Delta t}{\epsilon^{(3q+4)/q}}.
\end{align}
For $ I_2^2, $ in view of H\"{o}lder inequality and the energy inequality in Definition \ref{defi-1}, we obtain
\begin{align}\label{22}
 I_2^2 \leq& C\Big|\int _0^{T- \Delta t}\int _{\mathbb{R}^3}\int_t^{t+\Delta t}
\sqrt{\mu} \nabla {\bf u}(s,{\bf x})\,
\mathrm{d}s \cdot [\sqrt{\mu}\nabla{\bf u}^\epsilon(t+\Delta t,{\bf x})-\sqrt{\mu}\nabla{\bf u}^ \epsilon(t,{\bf x})]\, \mathrm{d}{\bf x}\, \mathrm{d}t\Big| \nonumber \\
\leq& C\Big[\int _0^{T- \Delta t}\int _{\mathbb{R}^3}\Big(\int_t^{t+\Delta t}\sqrt{\mu} \nabla {\bf u}(s,{\bf x})\,
\mathrm{d}s\Big)^2 \, \mathrm{d}{\bf x}\, \mathrm{d}t\Big]^{\frac{1}{2}}
\Big[\int _0^{T- \Delta t}\int _{\mathbb{R}^3}\mu  |\nabla{\bf u}^\epsilon|^2\, \mathrm{d}{\bf x}\, \mathrm{d}t\Big]^{\frac{1}{2}} \nonumber \\
 \leq&C (\Delta t)^{\frac{1}{2}}\Big[\int _0^{T- \Delta t}\int _{\mathbb{R}^3}\int_t^{t+\Delta t}\mu |\nabla {\bf u}|^2(s,{\bf x})\,
\mathrm{d}s\, \mathrm{d}{\bf x}\, \mathrm{d}t\Big]^{\frac{1}{2}} \nonumber \\
 \leq& C \Delta t.
\end{align}
Plugging \eqref{213} and \eqref{22} into \eqref{i2} yields
\begin{equation}
  I_2 \leq \frac{C \Delta t}{\epsilon^{{(3q+4)/q}}}+C \Delta t \leq \frac{C \Delta t}{\epsilon^{(3q+4)/q}}.
 \end{equation}
By choosing $ \epsilon=(C\Delta t)^{\frac{q}{6q+8}}, $ we deduce that
\begin{equation}\label{I2}
  I_2\leq \Delta t^\frac{1}{2}  \rightarrow 0,~as ~\Delta t \rightarrow 0.
\end{equation}

To handle the term  $ I_1,$ we choose a $C^{\infty}$ nonnegative cut-off function as the following
\begin{eqnarray*}
 \varphi(t)=
  \begin{cases}
  0, & t\leq \Delta t\\
  1, &  2\Delta t \leq t \leq T- 2\Delta t\\
  0, & t \geq T-\Delta t,
  \end{cases}
\end{eqnarray*}
where $ \Delta t>0 $ is small and $|\varphi'(t)|\leq \frac{1}{\Delta t}$ .
Notice that $ \varphi(t) \in \mathcal{D}(0,+ \infty) $, and
 $ \varphi(t)$ converges to $1$ almost everywhere as $ \Delta t \rightarrow 0.$

  By using H\"{o}lder inequality, Corollary \ref{cor-2} and \eqref{u-conv}, as $\Delta t \rightarrow 0$,
  we have
 \begin{align}\label{I1}
   I_1=&\int _0^{T- \Delta t}\int _{\mathbb{R}^3} [\rho {\bf u}(t+\Delta t,{\bf x})
  -\rho {\bf u}(t,{\bf x})]~ \big [\big(~{\bf u}(t+\Delta t,{\bf x})
  - \varphi(t){\bf u}^\epsilon(t+\Delta t,{\bf x})~\big)
  -\big(~{\bf u}(t,{\bf x})-\varphi(t){\bf u}^ \epsilon(t,{\bf x})~\big)\big]\, \mathrm{d}{\bf x}\, \mathrm{d}t \nonumber \\
  \leq&C \int_0^{T- \Delta t} \|\sqrt{\rho} {\bf u}\|_{L^q}\big[~\|{\bf u}_1- {\bf u}_1^\epsilon\|_{L^{l}}\|\sqrt{\rho}\|_{L^{p_1}}+\|(1- \varphi(t)){\bf u}_1^\epsilon\|_{L^{l}}\|\sqrt{\rho}\|_{L^{p_1}}\big]\, \mathrm{d}t \nonumber \\ 
  &+C \int_0^{T- \Delta t} \|\sqrt{\rho} {\bf u}\|_{L^2}\big[~\|{\bf u}_2- {\bf u}_2^\epsilon\|_{L^2}+\|(1- \varphi(t)){\bf u}_2^\epsilon\|_{L^2}\big]\, \mathrm{d}t \nonumber \\
  \leq& C\|\sqrt{\rho} {\bf u}\|_{L^q(0,T;L^q(\mathbb{R}^3))}\big[~\|{\bf u}_1- {\bf u}_1^\epsilon\|_{L^ {l}(0,T;L^{l}(\mathbb{R}^3))}+
   \|{\bf u}_1^\epsilon\|_{L^ {l}(0,T;L^{l}(\mathbb{R}^3))}\|1- \varphi\|_{L^{p_1}(0,T)}\big]
   \nonumber \\ 
   +& C\|\sqrt{\rho} {\bf u}\|_{L^\infty(0,T;L^2(\mathbb{R}^3))}\big[~\|{\bf u}_2- {\bf u}_2^\epsilon\|_{L^ {\infty}(0,T;L^2(\mathbb{R}^3))}+
   \|{\bf u}_2^\epsilon\|_{L^ \infty(0,T;L^2(\mathbb{R}^3))}\|1- \varphi\|_{L^\infty(0,T)}\big] \rightarrow 0,
  \end{align}
  where $ \frac{1}{q}+\frac{1}{l}+\frac{1}{p_1}=1, q>4.$

  Combining \eqref{I3},\eqref{I2} and \eqref{I1}, we complete the proof.
  \end{proof}

Thanks to Lemmas \ref{lem-R3}-\ref{lem-R4}, we have   $ L^2 $-equicontinuity of $ \sqrt{\rho}^\mu {\bf u}^\mu $. Thus, the following proposition follows.

\begin{proposition}\label{proposition1}
Under {\sl Assumption (RICKHw)},  for any $ T >0 $, there exists a subsequence (still denoted) $ \sqrt{\rho}^\mu {\bf u}^\mu $ and a function
$ \sqrt{\rho}{\bf u} \in L^2((0,T)\times \mathbb{R}^3) $ such that
   \begin{equation}\label{strong convergence}
     	\sqrt{\rho}^\mu {\bf u}^\mu \rightarrow \sqrt{\rho}{\bf u}~~~~~
      in~L^2((0,T)\times \mathbb{R}^3)\;~as~\mu\to0.
  \end{equation}

\end{proposition}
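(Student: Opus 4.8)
The plan is to deduce strong compactness of $\{\sqrt{\rho^\mu}\mathbf{u}^\mu\}_{\mu>0}$ from the Fréchet--Kolmogorov--Riesz criterion applied in $L^2((0,T)\times\mathbb{R}^3)$, after extending each function by zero for $t\notin(0,T)$. Three facts are needed: a uniform bound, uniform equicontinuity under translations in $(t,\mathbf{x})$, and uniform smallness of the tails at spatial infinity. The first is immediate from the energy inequality \eqref{energy}--\eqref{Energy}, which gives $\|\sqrt{\rho^\mu}\mathbf{u}^\mu\|_{L^\infty(0,T;L^2(\mathbb{R}^3))}\le C$, hence $\|\sqrt{\rho^\mu}\mathbf{u}^\mu\|_{L^2((0,T)\times\mathbb{R}^3)}\le C$ uniformly in $\mu$. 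The equicontinuity is obtained by combining Lemma \ref{lem-R3} (translations in $\mathbf{x}$) with Lemma \ref{lem-R4} (translations in $t$, stated on $(0,T-\Delta t)\times\mathbb{R}^3$), the leftover strip $t\in(T-\Delta t,T)$ being absorbed via $\int_{T-\Delta t}^{T}\|\sqrt{\rho^\mu}\mathbf{u}^\mu(t)\|_{L^2(\mathbb{R}^3)}^2\,\mathrm{d}t\le C\Delta t$, so that
\[
\sup_{\mu>0}\int_{\mathbb{R}}\int_{\mathbb{R}^3}\big|\sqrt{\rho^\mu}\mathbf{u}^\mu(t+\Delta t,\mathbf{x}+\Delta\mathbf{x})-\sqrt{\rho^\mu}\mathbf{u}^\mu(t,\mathbf{x})\big|^2\,\mathrm{d}\mathbf{x}\,\mathrm{d}t\longrightarrow 0
\]
as $(\Delta t,\Delta\mathbf{x})\to0$. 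Combined with the uniform bound, this already yields relative compactness in $L^2((0,T)\times B_R)$ for every $R>0$, and a diagonal extraction in $R$ produces a subsequence (not relabelled) converging in $L^2_{\mathrm{loc}}((0,T)\times\mathbb{R}^3)$ to some $\mathbf{w}\in L^2((0,T)\times\mathbb{R}^3)$.

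To upgrade this to convergence on the full space one needs $\sup_\mu\int_0^T\int_{|\mathbf{x}|>R}|\sqrt{\rho^\mu}\mathbf{u}^\mu|^2\,\mathrm{d}\mathbf{x}\,\mathrm{d}t\to0$ as $R\to\infty$. The route I would take uses the continuity equation: testing it against a far-field cut-off $1-\phi(\mathbf{x}/R)$ and using that $\rho^\mu\mathbf{u}^\mu=\sqrt{\rho^\mu}\cdot\sqrt{\rho^\mu}\mathbf{u}^\mu$ is bounded in $L^\infty(0,T;L^{4/3}(\mathbb{R}^3))$ (thanks to $\|\sqrt{\rho^\mu}(t)\|_{L^4}^4=\int(\rho^\mu)^2=\int\rho_0^2$, finite when $\rho_0\in L^1\cap L^\infty$) gives $\sup_{t,\mu}\int_{|\mathbf{x}|>R}\rho^\mu\,\mathrm{d}\mathbf{x}\le\int_{|\mathbf{x}|>R/2}\rho_0\,\mathrm{d}\mathbf{x}+CTR^{-1/4}\to0$, i.e. uniform tightness of $\rho^\mu$. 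Interpolating this with the uniform $L^q$-bound of Corollary \ref{cor-2} ($q>2$) and the splitting $\mathbf{u}^\mu=\mathbf{u}^\mu\mathbf{1}_{\{\rho^\mu<\delta_0\}}+\mathbf{u}^\mu\mathbf{1}_{\{\rho^\mu\ge\delta_0\}}$ already used in the proof of Lemma \ref{lem-R4} then controls the tail of $\int\rho^\mu|\mathbf{u}^\mu|^2$ uniformly and closes the estimate, so the local convergence becomes convergence in $L^2((0,T)\times\mathbb{R}^3)$.

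It remains to identify the limit with a product $\sqrt{\rho}\,\mathbf{u}$. Passing to a further subsequence, $\rho^\mu\rightharpoonup\rho$ weak-$\ast$ in $L^\infty$; since $\partial_t\rho^\mu=-\mathrm{div}(\rho^\mu\mathbf{u}^\mu)$ is uniformly bounded in $L^\infty(0,T;H^{-1}_{\mathrm{loc}}(\mathbb{R}^3))$ (because $\rho^\mu\mathbf{u}^\mu$ is bounded in $L^\infty(0,T;L^2(\mathbb{R}^3))$), the standard Aubin--Lions analysis of the continuity equation with bounded density gives $\rho^\mu\to\rho$, hence $\sqrt{\rho^\mu}\to\sqrt{\rho}$, strongly in $C([0,T];L^p_{\mathrm{loc}}(\mathbb{R}^3))$ for all $1\le p<\infty$. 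One then checks $\mathbf{w}=\mathbf{0}$ a.e. on $\{\rho=0\}$ (using $\rho^\mu|\mathbf{u}^\mu|^2=|\sqrt{\rho^\mu}\mathbf{u}^\mu|^2\to|\mathbf{w}|^2$ in $L^1_{\mathrm{loc}}$ together with the strong $L^p_{\mathrm{loc}}$-convergence of $\rho^\mu$ and, on $\{\rho^\mu\ge\delta_0\}$, the $L^q$-bound of Corollary \ref{cor-2}), and defines $\mathbf{u}:=\mathbf{w}/\sqrt{\rho}$ on $\{\rho>0\}$, $\mathbf{u}:=\mathbf{0}$ on $\{\rho=0\}$; this gives $\mathbf{w}=\sqrt{\rho}\,\mathbf{u}$, which is exactly \eqref{strong convergence}.

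I expect the genuinely hard step to be the tail estimate at spatial infinity: the bounds furnished by Lemmas \ref{lem-R1}--\ref{lem-R4} and the energy inequality are regularity/frequency-type bounds that do not on their own prevent mass or kinetic energy from escaping to $|\mathbf{x}|=\infty$, so one must couple the continuity equation (to locate the mass) with the higher integrability of Corollary \ref{cor-2}, and this is where the integrability of the data $\rho_0$ really enters. By contrast, the mismatch of time intervals between Lemmas \ref{lem-R3} and \ref{lem-R4} is only a minor nuisance, dealt with by the zero-extension in $t$ and the endpoint-strip estimate above.
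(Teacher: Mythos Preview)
Your approach is essentially the same as the paper's: deduce strong $L^2$ compactness of $\sqrt{\rho^\mu}\mathbf{u}^\mu$ from the equicontinuity Lemmas \ref{lem-R3}--\ref{lem-R4} via the Riesz--Kolmogorov criterion. The paper's own proof is in fact a single sentence (``Thanks to Lemmas \ref{lem-R3}--\ref{lem-R4}\ldots\ the following proposition follows''), so your treatment is considerably more careful than the original---in particular you explicitly handle the endpoint strip $(T-\Delta t,T)$, the tail at spatial infinity, and the identification of the limit as a product $\sqrt{\rho}\,\mathbf{u}$, none of which the paper addresses. One small caveat: your Aubin--Lions step for \emph{strong} convergence of $\rho^\mu$ in $C([0,T];L^p_{\mathrm{loc}})$ is not immediate, since $\nabla\mathbf{u}^\mu$ is not uniformly bounded as $\mu\to0$ and you have no uniform spatial regularity on $\rho^\mu$; but this is inessential for the proposition, as one can simply name the strong $L^2$ limit $\mathbf{w}$ and define $\mathbf{u}$ on $\{\rho>0\}$ (exactly as you do), with the weak convergence of $\sqrt{\rho^\mu}$ sufficing for the subsequent limit passage in Section~4.
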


\section{Vanishing viscosity limit} In this section, we aim to prove our main result by the compactness argument as $\mu$ tends to zero.
Note that  $ \rho^\mu$ and $ \sqrt{\rho^{\mu}} $ are uniformly bounded in $ L^\infty([0,T]\times \mathbb{R}^3) \cap L^\infty(0,T;L^p(\mathbb{R}^3)),$ where $ 1 \leq p < \infty $, thus we have
\begin{align}\label{weak convergence of density}
  &\rho^{\mu}\to \rho  \quad\text{ weakly in } L^p([0,T]\times \mathbb{R}^3),~~1 < p<\infty, \nonumber \\
  &\sqrt{\rho^{\mu}}\to\sqrt{\rho}  \quad\text{ weakly in }  L^p([0,T]\times \mathbb{R}^3), ~~1 < p<\infty,
\end{align}
as $\mu\to 0.$
With the help of  Proposition \ref{proposition1}, for any test function $\boldsymbol{\phi} \in C_0^\infty(\mathbb{R}_+ \times \mathbb{R}^3;\mathbb{R}^3) $, we have
\begin{equation}
\begin{split}
\label{convergence of mass}
\int_0^T\int_{\mathbb{R}^3}\sqrt{\rho^{\mu}}\sqrt{\rho^{\mu}} {\bf u}^{\mu} \cdot
\boldsymbol{\phi}\,\mathrm{d}{\bf x}\,\mathrm{d}t\to \int_0^T\int_{\mathbb{R}^3}\sqrt{\rho}\sqrt{\rho}{\bf u}\cdot
\boldsymbol{\phi}\,\mathrm{d}{\bf x}\,\mathrm{d}t.
\end{split}
\end{equation}
Meanwhile, Proposition \ref{proposition1} yields that 
\begin{equation}
\begin{split}
\label{convergence of converction}
\int_0^T\int_{\mathbb{R}^3}\left(\sqrt{\rho^{\mu}}{\bf u}^{\mu}\otimes \sqrt{\rho^{\mu}} {\bf u}^{\mu}\right):\nabla\boldsymbol{\phi}\,\mathrm{d}{\bf x}\,\mathrm{d}t\to \int_0^T\int_{\mathbb{R}^3}\left(\sqrt{\rho}{\bf u}\otimes\sqrt{\rho}{\bf u}\right):\nabla\boldsymbol{\phi}\,\mathrm{d}{\bf x}\,\mathrm{d}t.
\end{split}
\end{equation}
The viscous term in the Navier-Stokes vanishes by letting $\mu$ tend to zero, in the following way
\begin{equation}\label{3}
 \Big|\int _0^T\int _{\mathbb{R}^3} \mu\nabla {\bf u}^\mu\cdot \nabla \boldsymbol{\phi}\, \mathrm{d}{\bf x}\, \mathrm{d}t \Big| \leq
 C \sqrt{\mu} ~\|\sqrt{\mu}\nabla {\bf u}^\mu\|_{L^2(0,T;L^2(\mathbb{R}^3))}\rightarrow 0.
\end{equation}

Note that in Definition \ref{defi-1}, the weak solutions $ (\rho^\mu, {\bf u}^\mu) $ of the Navier-Stokes equations satisfy the following weak formulation
\begin{align}\label{wf1}
	&\int _0^T\int _{\mathbb{R}^3} (\rho^\mu{\bf u}^\mu \cdot\boldsymbol{\phi}_t
  +(\rho^\mu {\bf u}^\mu \otimes {\bf u}^\mu): \nabla\boldsymbol{\phi}+\rho^\mu \mathbf{f}\cdot\boldsymbol{\phi})\, \mathrm{d}{\bf x}\, \mathrm{d}t +\int_{\mathbb{R}^3}{\bf m}_0({\bf x}) \cdot\boldsymbol{\phi}(0,{\bf x})\,\mathrm{d}{\bf x} \nonumber \\
	 =&\int _0^T\int _{\mathbb{R}^3} \mu\nabla {\bf u}^\mu\cdot \nabla \boldsymbol{\phi}\, \mathrm{d}{\bf x}\, \mathrm{d}t,
\end{align}
and
\begin{equation}\label{density equation}
  \int _0^T\int _{\mathbb{R}^3} \rho^\mu \phi_t+\rho^\mu {\bf u}^\mu \cdot \nabla \phi\, \mathrm{d}{\bf x}\, \mathrm{d}t+ \int_{\mathbb{R}^3} \rho_0^\mu \phi(0,{\bf x})\, \mathrm{d}{\bf x}=0.
\end{equation}

With  \eqref {weak convergence of density}-\eqref{3} at hand,  letting $\mu\to 0$ in  \eqref{wf1} and \eqref{density equation}, we have 
\begin{align}\label{momentum}
	\int _0^T\int _{\mathbb{R}^3} (\sqrt{\rho}\sqrt{\rho} {\bf u} \cdot\boldsymbol{\phi}_t
  +(\sqrt{\rho} {\bf u} \otimes \sqrt{\rho}{\bf u}): \nabla\boldsymbol{\phi}
  +\rho \mathbf{f} \cdot\boldsymbol{\phi})\, \mathrm{d}{\bf x}\, \mathrm{d}t
  +\int_{\mathbb{R}^3}{\bf m}_0({\bf x})\cdot\boldsymbol{\phi}(0,{\bf x})\,\mathrm{d}{\bf x}
	 =0,
\end{align}
and

\begin{equation}\label{mass}
\int _0^T\int _{\mathbb{R}^3} \rho \phi_t+\sqrt{\rho}\sqrt{\rho} {\bf u} \cdot \nabla \phi\, \mathrm{d}{\bf x}\, \mathrm{d}t+ \int_{\mathbb{R}^3} \rho_0 \phi(0,{\bf x})\, \mathrm{d}{\bf x}=0.	
\end{equation}

Since $ \int _0^t\int _{\mathbb{R}^3} \mu|\nabla {\bf u^\mu}|^2\, \mathrm{d}{\bf x}\, \mathrm{d} t \geq0 $, from \eqref{energy} we obtain
\begin{equation}
\label{energy for NS}
    \int _{\mathbb{R}^3} \frac{\rho^\mu |{\bf u^\mu}|^2}{2}\, \mathrm{d}{\bf x}
     \leq \int _{\mathbb{R}^3} \frac{|{\bf m}_0|^2}{2 \rho_0}\, \mathrm{d}{\bf x} +\int _0^t\int _{\mathbb{R}^3} \rho^\mu{\bf u^\mu} \cdot {\bf f}\, \mathrm{d}{\bf x}\, \mathrm{d} t.
\end{equation}
By   Proposition \ref{proposition1} and \eqref{weak convergence of density},  we have 
$$\int _0^t\int _{\mathbb{R}^3} \rho^\mu{\bf u^\mu} \cdot {\bf f}\, \mathrm{d}{\bf x}\, \mathrm{d} t\to \int _0^t\int _{\mathbb{R}^3} \rho{\bf u} \cdot {\bf f}\, \mathrm{d}{\bf x}\, \mathrm{d} t. $$
Thus,  by letting $\mu\to0$ in \eqref{energy for NS}, we deduce the following energy inequality
\begin{equation}\label{EE}
  \int _{\mathbb{R}^3} \frac{\rho |\bf u|^2}{2}\, \mathrm{d}{\bf x}
   \leq \int _{\mathbb{R}^3} \frac{|{\bf m}_0|^2}{2 \rho_0}\, \mathrm{d}{\bf x} +\int _0^t\int _{\mathbb{R}^3} \rho{\bf u} \cdot {\bf f}\, \mathrm{d}{\bf x}\, \mathrm{d} t.
\end{equation}
It is clear that \eqref{mass}, \eqref{momentum} and \eqref{EE} meet with  Definition \ref{EWD}. Therefore we conclude that $ (\rho, {\bf u}) $ is a weak solution of \eqref{Eulereq} with initial data
$ (\rho_0,{\bf m}_0)$.

\section{Appendix}\label{sec:appendix} 
In this section, we sketch the proof of Theorem \ref{rem:2}. In  the domain with period $ \mathbb{T}_P $, $ {\bf k}=(k_1,k_2,k_3)=\frac{2 \pi}{P}(n_1,n_2,n_3) \in \mathbb{R}^3, $ with $ n_j=0, \pm1, \pm2,\cdots, $ and $ j=1,2,3, $ is the discrete wavevector. Thus the total energy $ \mathcal{E}(t) $ per unit mass at time t for the inhomogeneous turbulence in $ \mathbb{T}_P $ is:
\begin{equation}
  \mathcal{E}(t)=\frac{1}{\int _{\mathbb{T}_P}\rho_0 \mathrm{d}{\bf x}}\int _{\mathbb{T}_P} \frac{\rho^\mu |{\bf u^\mu}|^2}{2}\, \mathrm{d}{\bf x}=\sum_{k \geq 0}E(t,k)= \sum_{k \geq 0}4 \pi q(t,k)k^2.
\end{equation}
And the Fourier transform of the weighted velocity $\sqrt{\rho}{\bf u}(t,{\bf x}) $ in the {\bf x}-variable is
\begin{equation*}
\widehat{\sqrt{\rho} {\bf u}}(t,\mathbf{k}) = \frac{1}{|\mathbb{T}_P|}\int _{\mathbb{T}_P} \sqrt{\rho} {\bf u}(t,{\bf x})e^{-i{\bf k} \cdot {\bf x}}\, \mathrm{d}{\bf x},
\end{equation*}
then we have
\begin{equation*}
  \sqrt{\rho}{\bf u}(t,{\bf x})=\sum_{\bf k} \widehat{\sqrt{\rho}{\bf u}}(t,{\bf k})e^{i {\bf k} \cdot {\bf x}} .
\end{equation*}

We have the following weaker version of {\sl Assumption (ICKHw)} in $\mathbb{T}_P $:
 \begin{assw}\label{ickhw}
For any $ T>0, $ there exists $ C_T>0 $ and $ k_{\ast} $(sufficiently large)  depending on $\rho_0,{\bf m}_0, P $ and $ \mathbf{f} $ but independent of the viscosity $\mu $ such that, for $ k=|\mathbf{k}|\geq k_ {\ast},$
\begin{equation}\label{ass-1}
	\sup_{k \geq k_{\ast}} \Big(|\mathbf{k}|^{3+ \beta}\int_0^T|\widehat{\sqrt{\rho}
	{\bf u}}(t,\mathbf{k})|^2\mathrm{d}t \Big)\leq C_T,~~for~some~\beta>0.
\end{equation}
 \end{assw}

By the {\sl Assumption (ICKHw)} and the Fourier transform, we can get the following uniform bound of $ \sqrt{\rho^\mu}{\bf u^\mu} $ in $ L^2(0,T;H^\alpha(\mathbb{T}_P)) $ for $ \alpha \in(0,1+\frac{\beta}{2}). $   For completeness, we present the proof which
is similar to \cite{Chen-Glimm2012,Chen-Glimm2019}.

 \begin{lemma}\label{lem-1}Under Assumption (ICKHw), for any $ T \in (0, +\infty), $ there exists $C>0,$
  independent of $ \mu>0, $ such that
 ~ \begin{equation}
 	\int _0^T\int _{\mathbb{T}_P} |D_x^{\alpha}( \sqrt{\rho^\mu} {\bf u}^\mu)|^2\, \mathrm{d}{\bf x} \, \mathrm{d} t \leq C,
\end{equation}
where $ \alpha \in(0, 1+\frac{\beta}{2}). $
 \end{lemma}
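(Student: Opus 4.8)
The plan is to mimic the periodic-domain argument that the authors already carried out in Lemma~\ref{lem-R1} for $\mathbb{R}^3$, only now using the \emph{discrete} Fourier series on $\mathbb{T}_P$ rather than the Fourier integral. First I would recall the definition of the fractional derivative $D_x^\alpha$ via multiplication by $|\mathbf{k}|^\alpha$ on the Fourier side, together with the Parseval identity on $\mathbb{T}_P$,
\begin{equation*}
  \int_0^T\!\!\int_{\mathbb{T}_P} |D_x^\alpha(\sqrt{\rho^\mu}{\bf u}^\mu)|^2\,\mathrm{d}{\bf x}\,\mathrm{d}t
  = |\mathbb{T}_P|\int_0^T \sum_{\mathbf{k}} |\mathbf{k}|^{2\alpha}\,|\widehat{\sqrt{\rho^\mu}{\bf u}^\mu}(t,\mathbf{k})|^2\,\mathrm{d}t .
\end{equation*}

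Next I would split the sum over $\mathbf{k}$ into the low-frequency part $|\mathbf{k}|<k_\ast$ and the high-frequency part $|\mathbf{k}|\ge k_\ast$. For the low frequencies, on the finitely many (for fixed $k_\ast$) relevant modes one has $|\mathbf{k}|^{2\alpha}\le k_\ast^{2\alpha}$, so this piece is bounded by $k_\ast^{2\alpha}\int_0^T\!\int_{\mathbb{T}_P}|\sqrt{\rho^\mu}{\bf u}^\mu|^2\,\mathrm{d}{\bf x}\,\mathrm{d}t$, which is controlled uniformly in $\mu$ by the energy estimate \eqref{Energy} (or its $\mathbb{T}_P$ analogue). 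For the high frequencies, I would write $|\mathbf{k}|^{2\alpha}=|\mathbf{k}|^{-(3+\beta)}|\mathbf{k}|^{2\alpha+3+\beta}$ and apply Assumption (ICKHw) \eqref{ass-1}: each term $|\mathbf{k}|^{3+\beta}\int_0^T|\widehat{\sqrt{\rho^\mu}{\bf u}^\mu}(t,\mathbf{k})|^2\,\mathrm{d}t\le C_T$, so the high-frequency piece is bounded by $C_T\sum_{|\mathbf{k}|\ge k_\ast}|\mathbf{k}|^{2\alpha-3-\beta}$. Since $\mathbf{k}$ ranges over the lattice $\frac{2\pi}{P}\mathbb{Z}^3$, this sum is comparable to the integral $\int_{k_\ast}^\infty r^{2\alpha-3-\beta}r^2\,\mathrm{d}r$, which converges precisely when $2\alpha-3-\beta+2<-1$, i.e.\ when $\alpha<1+\frac{\beta}{2}$. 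Combining the two bounds gives the claimed uniform estimate, with the constant $C$ depending on $\rho_0,{\bf m}_0,\mathbf{f},\alpha,k_\ast,P$ and $T$ but not on $\mu$.

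I do not expect a genuine obstacle here: the only points requiring a little care are (i) justifying that the lattice sum $\sum_{|\mathbf{k}|\ge k_\ast}|\mathbf{k}|^{2\alpha-3-\beta}$ converges and comparing it to the corresponding radial integral (a standard integral-comparison / counting-of-lattice-points-in-shells argument, where the number of lattice points with $|\mathbf{k}|\sim R$ grows like $R^2$, up to the factor $(2\pi/P)^3$), and (ii) keeping track of the normalization factor $|\mathbb{T}_P|$ in the Parseval identity, which only affects the constant. If anything is ``the hard part,'' it is merely bookkeeping the $P$-dependence of the constants consistently with the statement of Assumption (ICKHw). Since the proof is essentially identical to the one in \cite{Chen-Glimm2012,Chen-Glimm2019}, I would present it briefly and refer to those works for the routine details.
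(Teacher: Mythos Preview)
Your proposal is correct and follows essentially the same route as the paper's own proof: apply Parseval on $\mathbb{T}_P$, split the Fourier sum at $|\mathbf{k}|=k_\ast$, bound the low-frequency part by $k_\ast^{2\alpha}\int_0^T\|\sqrt{\rho^\mu}{\bf u}^\mu\|_{L^2}^2\,\mathrm{d}t$ via the energy estimate, and use Assumption~(ICKHw) on the high-frequency part to reduce to the lattice sum $\sum_{|\mathbf{k}|\ge k_\ast}|\mathbf{k}|^{2\alpha-3-\beta}$. The paper records exactly this computation (with the constants absorbed into $C$) and, like you, refers to \cite{Chen-Glimm2012,Chen-Glimm2019} for the routine details.
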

\begin{proof} Note that the definition of fractional derivatives via the Fourier transform, the Parseval identity and {\sl Assumption (ICKHw)} imply that
\begin{align}
	\int _0^T\int _{\mathbb{T}_P} |D_x^{\alpha}( \sqrt{\rho^\mu} {\bf u}^\mu)|^2\, \mathrm{d}{\bf x}\, \mathrm{d} t
	\leq&C\int _0^T \sum_{\bf k} |\widehat{ D_x^\alpha( \sqrt{\rho^\mu} {\bf u}^\mu)}|^2\, \mathrm{d}t \nonumber \\
	\leq& C\int _0^T \sum_{\bf k} |{\bf k}|^{2 \alpha}|\widehat{\sqrt{\rho^\mu}{\bf u}^\mu}|^2\, \mathrm{d}t \nonumber \\
	\leq& C\int _0^T \sum_{0 \leq |{\bf k}| \leq k_{\ast}}|{\bf k}|^{2 \alpha}|\widehat{\sqrt{\rho^\mu}{\bf u}^\mu}|^2 \, \mathrm{d}t
	+C\int _0^T \sum_{|{\bf k}| \geq k_{\ast}}|{\bf k}|^{2 \alpha}|\widehat{\sqrt{\rho^\mu}{\bf u}^\mu}|^2 \, \mathrm{d}t \nonumber \\
	 \leq&	C k_ \ast^{2 \alpha}\int _0^T\int _{\mathbb{T}_P} |\sqrt{\rho^\mu} {\bf u}^\mu|^2\, \mathrm{d}{\bf x}\, \mathrm{d}t+ C\sum_{|{\bf k}| \geq k_{\ast}} |{\bf k}|^{2 \alpha-3- \beta} \nonumber \\
	 \leq& C,
\end{align}	
where $ \alpha<1+\frac{\beta}{2}. $
\end{proof}
With Lemma \ref{lem-1} at hand, we are able to deduce the following high integrability of $ \sqrt{\rho^\mu}{\bf u}^\mu $, i.e., 
$$\|\sqrt{\rho^\mu}{\bf u}^\mu\|_{ L^q((0,T)\times \mathbb{T}_P)}\leq C,
~~for~q=q(\beta,r)>2.$$
It also gives us  the $L^2$-equicontinuity of $ \sqrt{\rho^\mu}{\bf u}^\mu $ in space variable {\bf x} in $ L^2((0,T)\times \mathbb{T}_P), $ independent of $ \mu. $

 \begin{lemma}\label{lem-3}Under {\sl Assumption (ICKHw)}, for any $ T \in (0, +\infty), $
 we get the equicontinuity of $ \sqrt{\rho^\mu} {\bf u}^\mu $ with respect to the space variable ${\bf x}$ in $ L^2((0,T)\times \mathbb{T}_P) $, independent of $ \mu, $ i.e.,
 ~\begin{equation}
 	\int _0^T\int _{\mathbb{T}_P} | \sqrt{\rho^\mu}{\bf u}^\mu(t,{\bf x}+ \Delta {\bf x})-\sqrt{\rho^\mu} {\bf u}^\mu(t,{\bf x})|^2\, \mathrm{d}{\bf x}\, \mathrm{d}t
 	\rightarrow 0,~~as ~ \Delta {\bf x} \rightarrow 0.
 \end{equation}
 \end{lemma}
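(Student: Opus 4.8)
The plan is to mimic the proof of Lemma~\ref{lem-R3} for the whole space, replacing the Fourier integral by a Fourier series. First I would write $\sqrt{\rho^\mu}{\bf u}^\mu(t,{\bf x}+\Delta{\bf x})-\sqrt{\rho^\mu}{\bf u}^\mu(t,{\bf x})$ in terms of its Fourier series: since $\sqrt{\rho^\mu}{\bf u}^\mu(t,{\bf x})=\sum_{\bf k}\widehat{\sqrt{\rho^\mu}{\bf u}^\mu}(t,{\bf k})e^{i{\bf k}\cdot{\bf x}}$, the shifted difference has Fourier coefficients $\widehat{\sqrt{\rho^\mu}{\bf u}^\mu}(t,{\bf k})(e^{i{\bf k}\cdot\Delta{\bf x}}-1)$.

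Next I would apply the Parseval identity on $\mathbb{T}_P$ to the difference, giving
\begin{equation*}
\int_0^T\int_{\mathbb{T}_P}|\sqrt{\rho^\mu}{\bf u}^\mu(t,{\bf x}+\Delta{\bf x})-\sqrt{\rho^\mu}{\bf u}^\mu(t,{\bf x})|^2\,\mathrm{d}{\bf x}\,\mathrm{d}t
= C\int_0^T\sum_{\bf k}|\widehat{\sqrt{\rho^\mu}{\bf u}^\mu}(t,{\bf k})|^2|e^{i{\bf k}\cdot\Delta{\bf x}}-1|^2\,\mathrm{d}t.
\end{equation*}
Then I would use the elementary bound $|e^{i{\bf k}\cdot\Delta{\bf x}}-1|\le 2^{1-\alpha}|{\bf k}\cdot\Delta{\bf x}|^{\alpha}\le C|{\bf k}|^{\alpha}|\Delta{\bf x}|^{\alpha}$ valid for any $\alpha\in(0,1]$ (for $\alpha<1+\frac{\beta}{2}$ one splits into the finitely many low modes $|{\bf k}|\le k_\ast$, where boundedness of $\sqrt{\rho^\mu}{\bf u}^\mu$ in $L^2$ suffices, and high modes, where one uses $|{\bf k}|^{\alpha}$ with a genuinely fractional exponent). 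This yields the bound $C|\Delta{\bf x}|^{2\alpha}\int_0^T\sum_{\bf k}|{\bf k}|^{2\alpha}|\widehat{\sqrt{\rho^\mu}{\bf u}^\mu}(t,{\bf k})|^2\,\mathrm{d}t$.

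Finally I would recognize the remaining sum as, up to a constant, $\int_0^T\int_{\mathbb{T}_P}|D_x^{\alpha}(\sqrt{\rho^\mu}{\bf u}^\mu)|^2\,\mathrm{d}{\bf x}\,\mathrm{d}t$ by the definition of fractional derivatives via Fourier series and Parseval's identity, and invoke Lemma~\ref{lem-1} to bound it by a constant $C$ independent of $\mu$. Hence the whole expression is $\le C|\Delta{\bf x}|^{2\alpha}\to 0$ as $\Delta{\bf x}\to0$, uniformly in $\mu$, which is the claimed equicontinuity. The only mild subtlety — not really an obstacle — is keeping track of the two regimes in the inequality $|e^{i{\bf k}\cdot\Delta{\bf x}}-1|\le C|{\bf k}|^{\alpha}|\Delta{\bf x}|^{\alpha}$ so that one legitimately lands on the $H^\alpha$-norm controlled by Lemma~\ref{lem-1}; everything else is a direct transcription of the $\mathbb{R}^3$ argument with $\int_{\mathbb{R}^3}\mathrm{d}{\bf k}$ replaced by $\sum_{\bf k}$.
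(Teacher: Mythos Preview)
Your proposal is correct and follows essentially the same approach as the paper: apply Parseval to the shifted difference, use the elementary bound $|e^{i{\bf k}\cdot\Delta{\bf x}}-1|\le C|{\bf k}|^{\alpha}|\Delta{\bf x}|^{\alpha}$, and then invoke Lemma~\ref{lem-1} to control the resulting $H^\alpha$ norm uniformly in $\mu$. Your remark about the range of $\alpha$ in that inequality is in fact more careful than the paper, which simply writes the bound without comment; since any $\alpha\in(0,1]$ already suffices for the equicontinuity conclusion, no splitting into low and high modes is actually needed here.
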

 \begin{proof}Using Lemma \ref{lem-1} and Parseval identity, we obtain
 \begin{align*}
   \int _0^T\int _{\mathbb{T}_P} | \sqrt{\rho^\mu}{\bf u}^\mu(t,{\bf x}+ \Delta {\bf x})-\sqrt{\rho^\mu} {\bf u}^\mu(t,{\bf x})|^2\, \mathrm{d}{\bf x}\, \mathrm{d}t
   =& \int_0^T \sum_{\bf k} | \widehat{\sqrt{\rho^\mu}{\bf u}^\mu}|^2
   (e^{i{\bf k} \cdot \Delta {\bf x}}-1)^2\, d\mathrm{t} \nonumber \\
    \leq& C |\Delta {\bf x}|^{2 \alpha}\int_0^T \sum_{\bf k} |{\bf k}|^{2 \alpha}|\widehat{ \sqrt{\rho^\mu}{\bf u}^\mu}|^2 \, d\mathrm{t} \nonumber \\
   \leq& C |\Delta {\bf x}|^{2 \alpha} \int _0^T\int _{\mathbb{T}_P} |D_{\bf x}^\alpha(\sqrt{\rho^\mu}{\bf u}^\mu)|^2\, \mathrm{d}{\bf x}\, \mathrm{d}t  \nonumber \\
    \leq& C|\Delta {\bf x}|^{2 \alpha} .
 \end{align*}
 \end{proof}
For $ L^2- $ equicontinuity of $ \sqrt{\rho^\mu}{\bf u}^\mu $  with respect to $t$, the proof is similar to Lemma \ref{lem-R4} with some slight modification. Since the domain considered is a domain with period $\mathbb{T}_P=[-P/2,P/2]^3 \subset \mathbb{R}^3, P>0.$ Hence, it need not to estimate $ I_i~(i=1,2,3,4)$ by dividing $ {\bf u}$ into ${\bf u}_1 $ and $ {\bf u}_2$ .  We here only state the following lemma.
\begin{lemma}\label{lem-4}Under {\sl Assumption (ICKHw)}, for any $ T >0 $,
 we have the equicontinuity of $ \sqrt{\rho^\mu} {\bf u}^\mu $ with respect to the time variable t in $ L^2((0,T- \Delta t )\times \mathbb{T}_P) $, independent of $ \mu,$ i.e.
 ~\begin{equation}
 	\int _0^{T-\Delta t}\int _{\mathbb{T}_P}
 	| \sqrt{\rho^\mu}{\bf u}^\mu(t+ \Delta t,{\bf x})
 	-\sqrt{\rho^\mu} {\bf u}^\mu(t,{\bf x})|^2\, \mathrm{d}{\bf x}\, \mathrm{d}t
 	\rightarrow 0,~~as ~ \Delta t \rightarrow 0.
 \end{equation}
 \end{lemma}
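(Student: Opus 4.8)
The plan is to transplant the proof of Lemma~\ref{lem-R4} to the bounded domain $\mathbb{T}_P$, where the finite measure $|\mathbb{T}_P|<\infty$ and the absence of any loss of integrability at spatial infinity allow one to drop the vacuum decomposition ${\bf u}={\bf u}_1+{\bf u}_2$. First, drop the superscript $\mu$, fix a cut-off $\varphi(t)\in\mathcal{D}(0,+\infty)$ and a standard space-time mollifier $j_\epsilon$, and set ${\bf u}^\epsilon=j_\epsilon\ast{\bf u}$, $\rho^\epsilon=j_\epsilon\ast\rho$. As in \eqref{rt}, expand
\[
|\sqrt{\rho}{\bf u}(t+\Delta t)-\sqrt{\rho}{\bf u}(t)|^2=[\rho{\bf u}(t+\Delta t)-\rho{\bf u}(t)]\cdot[{\bf u}(t+\Delta t)-{\bf u}(t)]+\big(\text{two }\sqrt{\rho}\text{-difference terms}\big),
\]
insert $\pm\varphi(t){\bf u}^\epsilon$ in the factor $[{\bf u}(t+\Delta t)-{\bf u}(t)]$, and integrate over $(0,T-\Delta t)\times\mathbb{T}_P$ to obtain the same splitting $I_1+I_2+I_3+I_4$. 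The uniform-in-$\mu$ facts at our disposal are: $\sqrt{\rho^\mu}{\bf u}^\mu$ bounded in $L^\infty(0,T;L^2(\mathbb{T}_P))$ and, by Lemma~\ref{lem-1} together with Gagliardo--Nirenberg, in $L^q((0,T)\times\mathbb{T}_P)$ for some $q=q(\beta)>2$; $\rho^\mu$ bounded in $L^\infty$; $\sqrt{\mu}\,\nabla{\bf u}^\mu$ bounded in $L^2$; and $\sqrt{\rho^\epsilon}\to\sqrt{\rho^\mu}$ in $C([0,T];L^p(\mathbb{T}_P))$ for every $p<\infty$.

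The four terms are then handled exactly as in Lemma~\ref{lem-R4}. For $I_3$ (and symmetrically $I_4$) add and subtract $\sqrt{\rho^\epsilon}$ as in \eqref{I3}; on the finite-measure domain each resulting integral is closed by a single H\"older inequality, using $\sqrt{\rho}{\bf u}\in L^q$ and either the $C([0,T];L^p)$-convergence or the time-continuity of $\sqrt{\rho^\epsilon}$, so all pieces vanish once $\epsilon=\epsilon(\Delta t)\to0$. For $I_2$, use $\mathrm{div}~{\bf u}^\epsilon=0$ and the weak momentum formulation of Definition~\ref{defi-1} to replace $\partial_s(\rho^\mu{\bf u}^\mu)$ by the convective term $(\rho^\mu{\bf u}^\mu\otimes{\bf u}^\mu):\nabla(\varphi{\bf u}^\epsilon)$, the viscous term $-\mu\nabla{\bf u}^\mu\cdot\nabla(\varphi{\bf u}^\epsilon)$ and the forcing term $\rho^\mu{\bf f}\cdot(\varphi{\bf u}^\epsilon)$, each integrated over $s\in(t,t+\Delta t)$; the pointwise mollifier bounds $|\varphi{\bf u}^\epsilon|\lesssim\epsilon^{-a}$ and $|\varphi\nabla{\bf u}^\epsilon|\lesssim\epsilon^{-b}$ (the analogues of \eqref{1}--\eqref{2}, now without the ${\bf u}_1,{\bf u}_2$ split) give the convective and forcing contributions $\lesssim\Delta t\,\epsilon^{-b}$, while the viscous contribution is $\lesssim(\Delta t)^{1/2}\big(\mu\,\|\nabla{\bf u}^\mu\|_{L^2_{t,x}}^2\big)^{1/2}\lesssim\Delta t$ by Cauchy--Schwarz and the energy inequality, as in \eqref{213}--\eqref{22}. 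Thus $I_2\lesssim\Delta t\,\epsilon^{-b}$, and taking $\epsilon$ a small positive power of $\Delta t$ — e.g.\ $\epsilon=(C\Delta t)^{q/(6q+8)}$ as in \eqref{I2} — makes $I_2\to0$. Finally, choosing $\varphi$ to be the explicit cut-off of Lemma~\ref{lem-R4} (equal to $1$ on $[2\Delta t,T-2\Delta t]$, $|\varphi'|\le1/\Delta t$, hence $\varphi\to1$ a.e.), the term $I_1$ is bounded by H\"older by $\|\sqrt{\rho}{\bf u}\|_{L^q}\big(\|{\bf u}-{\bf u}^\epsilon\|+\|(1-\varphi){\bf u}^\epsilon\|\big)$-type quantities, which tend to $0$ exactly as in \eqref{I1}. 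Summing $I_1+I_2+I_3+I_4$ yields the claim.

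The main obstacle — identical to the one in Lemma~\ref{lem-R4} — is the term $I_2$: since $\partial_t(\rho^\mu{\bf u}^\mu)$ is merely a distribution it must be tested against the weak momentum equation, and this works only because the test function $\varphi(t){\bf u}^\epsilon$ is divergence free, so that the pressure drops out entirely (no pressure regularity on $\mathbb{T}_P$ is needed or available). The delicate point is then the competition between the mollification scale $\epsilon$, which forces negative powers of $\epsilon$ through the bounds on $\varphi{\bf u}^\epsilon$ and $\varphi\nabla{\bf u}^\epsilon$, and the factor $\Delta t$ gained from integrating over a time window of length $\Delta t$; this is settled by tying $\epsilon$ to a fractional power of $\Delta t$ chosen so that the net exponent of $\Delta t$ remains positive. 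Apart from this, the transition from $\mathbb{R}^3$ to $\mathbb{T}_P$ only simplifies matters: all H\"older pairings are now over a finite-measure set, and the small-/large-density splitting of ${\bf u}$ used in the whole-space argument may simply be omitted.
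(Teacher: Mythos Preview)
Your proposal is correct and follows exactly the approach the paper indicates: the paper does not give a detailed proof of this lemma but states that it ``is similar to Lemma~\ref{lem-R4} with some slight modification'' and that, on the bounded periodic domain, ``it need not to estimate $I_i~(i=1,2,3,4)$ by dividing ${\bf u}$ into ${\bf u}_1$ and ${\bf u}_2$''. Your outline reproduces precisely this strategy---the same $I_1+I_2+I_3+I_4$ splitting, the same divergence-free test $\varphi(t){\bf u}^\epsilon$ to avoid the pressure in $I_2$, the same $\epsilon$-versus-$\Delta t$ balancing, and the removal of the vacuum decomposition thanks to the finite-measure setting---so it matches the paper's (sketched) argument.
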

 With the $ L^2- $ equicontinuity of $ \sqrt{\rho^\mu}{\bf u}^\mu $  with respect to $x$ and $ t $, we directly deduce that there exists a subsequence (still denoted) $ \sqrt{\rho}^\mu {\bf u}^\mu $ and a function
$ \sqrt{\rho}{\bf u} \in L^2((0,T)\times \mathbb{T}_P) $ such that
$$ \sqrt{\rho}^\mu {\bf u}^\mu \rightarrow \sqrt{\rho}{\bf u}~~~~~
      in~L^2((0,T)\times \mathbb{T}_P)~as~\mu\to0. $$
Finally we can get a same theorem as Theorem \ref{thm-R1} as follows: 
\begin{theorem}\label{thm-1} 
Under Assumption (ICKHw) \eqref{ass-1}, for the weak solution $ (\rho^\mu, {\bf u^\mu}) $ of \eqref{maineq}-\eqref{I.C.} as in Definition \ref{defi-1}, there exists a subsequence (still denote) $ (\rho^\mu, {\bf u^\mu}) $ and a function $ (\rho, {\bf u}) $ such that as $ \mu \rightarrow 0,$
\begin{equation}\label{conv}
 \rho^\mu \rightarrow \rho ~~~weakly~in~L^p((0,T)\times \mathbb{T}_P),~
 \sqrt{\rho^\mu}{\bf u^\mu} \rightarrow \sqrt{\rho}{\bf u},~in~L^2((0,T)\times \mathbb{T}_P),
\end{equation}
 where $ 1 < p<\infty $, and $ (\rho, {\bf u}) $ is a weak solution of \eqref{Eulereq} with the initial data $ (\rho_0,{\bf m}_0) $.
\end{theorem}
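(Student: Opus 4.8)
\textbf{Proof proposal for Theorem \ref{thm-1}.}

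The plan is to recycle almost verbatim the compactness machinery already developed in Section \ref{section3} for the whole-space problem, now starting from the periodic hypothesis (ICKHw). First I would record the building blocks that have just been established in the appendix: Lemma \ref{lem-1} gives the uniform-in-$\mu$ bound of $\sqrt{\rho^\mu}{\bf u}^\mu$ in $L^2(0,T;H^\alpha(\mathbb{T}_P))$ for $\alpha\in(0,1+\tfrac\beta2)$; the Gagliardo--Nirenberg interpolation then yields $\|\sqrt{\rho^\mu}{\bf u}^\mu\|_{L^q((0,T)\times\mathbb{T}_P)}\le C$ for some $q=q(\beta)>2$ (the periodic analogue of Corollary \ref{cor-2}, valid on the bounded domain $\mathbb{T}_P$ without any decay issue); Lemma \ref{lem-3} gives $L^2$-equicontinuity in ${\bf x}$; and Lemma \ref{lem-4} gives $L^2$-equicontinuity in $t$. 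Combining the two equicontinuity statements with the uniform $L^2$-bound and the Riesz--Fréchet--Kolmogorov compactness criterion on $(0,T)\times\mathbb{T}_P$, I extract a subsequence (not relabeled) and a limit $\sqrt{\rho}{\bf u}\in L^2((0,T)\times\mathbb{T}_P)$ with $\sqrt{\rho^\mu}{\bf u}^\mu\to\sqrt{\rho}{\bf u}$ strongly in $L^2$. This is the periodic counterpart of Proposition \ref{proposition1}.

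Next I would pass to the limit in the weak formulation exactly as in Section \ref{section3}. Since $0\le\rho^\mu\le\|\rho_0\|_{L^\infty}$ and, on the bounded domain $\mathbb{T}_P$, $L^\infty$-boundedness gives boundedness in every $L^p$, I obtain $\rho^\mu\rightharpoonup\rho$ and $\sqrt{\rho^\mu}\rightharpoonup\sqrt{\rho}$ weakly in $L^p((0,T)\times\mathbb{T}_P)$ for $1<p<\infty$ (using the continuity-equation control of the time behaviour of $\rho^\mu$ and the fact that $\sqrt{\cdot}$ applied to a bounded sequence converging weak-$*$ still has a limit identified through almost-everywhere convergence of $\rho^\mu$, which follows from the strong convergence of $\rho^\mu$ in $C([0,T];L^p)$ implied by the mass equation, cf. Lions \cite{Lions1}). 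Writing the momentum term as $\rho^\mu{\bf u}^\mu\otimes{\bf u}^\mu=(\sqrt{\rho^\mu}{\bf u}^\mu)\otimes(\sqrt{\rho^\mu}{\bf u}^\mu)$ and the momentum density as $\rho^\mu{\bf u}^\mu=\sqrt{\rho^\mu}\,(\sqrt{\rho^\mu}{\bf u}^\mu)$, the strong $L^2$-convergence of $\sqrt{\rho^\mu}{\bf u}^\mu$ combined with the weak convergence of $\sqrt{\rho^\mu}$ passes the nonlinear terms to the limit against any divergence-free $\boldsymbol{\phi}\in C_0^\infty$; the viscous term is killed by $|\int\mu\nabla{\bf u}^\mu\cdot\nabla\boldsymbol{\phi}|\le C\sqrt{\mu}\,\|\sqrt{\mu}\nabla{\bf u}^\mu\|_{L^2}\to0$ thanks to the energy inequality \eqref{energy}; and the forcing term converges since $\rho^\mu{\bf u}^\mu\to\sqrt{\rho}\sqrt{\rho}{\bf u}$ in $L^2$ and ${\bf f}\in L^2$. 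This yields the weak momentum identity \eqref{momentum} and the weak mass identity \eqref{mass} on $\mathbb{T}_P$. Finally, dropping the nonnegative dissipation term in \eqref{energy} and letting $\mu\to0$, using $\rho^\mu{\bf u}^\mu\cdot{\bf f}\to\rho{\bf u}\cdot{\bf f}$, gives the energy inequality \eqref{EE}. Hence $(\rho,{\bf u})$ satisfies Definition \ref{EWD} on $\mathbb{T}_P$, which is the assertion of Theorem \ref{thm-1}.

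The only place that genuinely requires care — and the main obstacle — is the $t$-equicontinuity step, i.e. the periodic analogue of Lemma \ref{lem-R4}. As the text already notes, on a bounded periodic domain one does not need the splitting ${\bf u}={\bf u}_1+{\bf u}_2$ forced by the behaviour near $\{\rho_0=0\}$ and at infinity in $\R^3$, so the argument simplifies; but one still has to run the decomposition $\sqrt{\rho^\mu}{\bf u}^\mu(t+\Delta t)-\sqrt{\rho^\mu}{\bf u}^\mu(t)$ into the four pieces $I_1,\dots,I_4$ of \eqref{rt}, introduce the divergence-free test function $\varphi(t){\bf u}^\epsilon(t\pm\Delta t,{\bf x})$ to avoid any pressure regularity, use the momentum equation to convert $\partial_t(\rho^\mu{\bf u}^\mu)$ into convective, viscous and forcing contributions integrated over $[t,t+\Delta t]$, bound the mollified velocity and its gradient by negative powers of $\epsilon$, and then optimize $\epsilon=\epsilon(\Delta t)$. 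The viscous contribution is handled exactly as in \eqref{22} using $\|\sqrt{\mu}\nabla{\bf u}^\mu\|_{L^2}\le C$, and the $\sqrt{\rho^\mu}$-difference terms $I_3,I_4$ are controlled via the strong convergence $\sqrt{(\rho^\mu)^\epsilon}\to\sqrt{\rho^\mu}$ in $C([0,T];L^p(\mathbb{T}_P))$ together with the uniform $L^q$-bound on $\sqrt{\rho^\mu}{\bf u}^\mu$. I would simply indicate these modifications and refer back to the proof of Lemma \ref{lem-R4} rather than reproduce the computation. Everything else is a routine transcription of Section \ref{section3} to the torus, so no new idea is needed beyond what is already in the paper.
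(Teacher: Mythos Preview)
Your proposal is correct and follows essentially the same approach as the paper: the appendix assembles Lemmas \ref{lem-1}, \ref{lem-3}, \ref{lem-4} to obtain the strong $L^2$-convergence of $\sqrt{\rho^\mu}{\bf u}^\mu$ on the torus, then invokes the vanishing-viscosity argument of Section~4 verbatim to recover the weak Euler solution. You have also correctly identified that the time-equicontinuity step (Lemma \ref{lem-4}) simplifies on $\mathbb{T}_P$ by dropping the ${\bf u}={\bf u}_1+{\bf u}_2$ decomposition, exactly as the paper notes.
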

\vspace{5mm}
\section*{Acknowledgments} 

Cheng Yu is partially supported by Collaboration Grants for Mathematicians from Simons Foundation.


\addcontentsline{toc}{section}{References}

\end{document}